\newcommand{\subjclass}[2][1991]{
  \let\@oldtitle\@title
  \gdef\@title{\@oldtitle\footnotetext{#1 \emph{Mathematics subject classification.} #2}}}
\newtheorem{Definition}{Definition}
\newtheorem{Lemma}{Lemma}
\newtheorem{Proposition}{Proposition}
\newtheorem{Remark}{Remark}
\newtheorem{Theorem}{Theorem}
\newtheorem*{Theorem*}{Theorem}
\title{The existence of a bounded linear extension operator for $L^{s,p}(\mathbb{R}^n)$ when $\frac{n}{p}<\{s\}$}
\author{Han Li}
\date{}
\subjclass[2020]{46E35}
\begin{document}
\setlength{\abovedisplayskip}{5pt}
\setlength{\belowdisplayskip}{5pt}
 
\maketitle
\begin{abstract}
Let $L^{s,p}(\mathbb{R}^n)$ denote the homogeneous Sobolev-Slobodeckij space. In this paper, we demonstrate the existence of a bounded linear extension operator from the jet space $J^{\lfloor s \rfloor}_E L^{s,p}(\mathbb{R}^n)$ to $L^{s,p}(\mathbb{R}^n)$ for any $E \subseteq \mathbb{R}^n$, $p \in [1, \infty)$, and $s \in (0, \infty)$ satisfying $\frac{n}{p} < \{s\}$, where $\{s\}$ represents the fractional part of $s$. Our approach builds upon the classical Whitney extension operator and uses the method of exponentially decreasing paths.
\end{abstract}

\section{Introduction}

Let $\mathbb{X}$ be a seminormed vector space   consisting of functions that are  $m$-times differentiable, where $m\in\mathbb{N}$, and these  functions are defined on $\mathbb{R}^n$.  For any subset $E\subseteq \mathbb{R}^n$,  define the   {\it jet space}  $J^m_E\mathbb{X}=\{(J^m_xF)_{x\in E}:F\in\mathbb{X}\}$, where $J^m_xF$ represents the $m$-th Taylor polynomial of $F \in \mathbb{X}$ at the point $x \in \mathbb{R}^n$.  We refer to $(J^m_xF)_{x\in E}$ as the {\it jet} of $F$ on the set $E$. For any given jet $f$ in $J^m_E\mathbb{X}$, the seminorm of $f$ on $J^m_E\mathbb{X}$ is defined by
\begin{equation*}
\|f\|_{J^m_E\mathbb{X}}:=\inf\{\|F\|_{\mathbb{X}}:F\in\mathbb{X} \text{ and } (J^m_xF)_{x\in E}=f\}. 
\end{equation*}
Providing the $m$-th Taylor polynomials is equivalent to specifying the function values and derivatives up to $m$-th order. 

For any operator $T:J^m_E\mathbb{X}\rightarrow\mathbb{X}$, we say $T$ is an extension operator if for any $f\in J^m_E\mathbb{X}$, $(J^m_xTf)_{x\in E}=f$. This gives rise to the extension problem: {\it Can we find a bounded linear extension operator from $J^m_E\mathbb{X}$ to $\mathbb{X}$?}

The extension problem was first introduced and studied by H. Whitney in 1934 \cite{Whitney1934Analytic,Whitney1934Differentiable,Whitney1934Functions}. In particular, he developed the Whitney extension operator and provided a solution to the extension problem for $C^m(\mathbb{R}^n)$ in \cite{Whitney1934Analytic}. Later in 1958, G. Glaeser showed that the Whitney extension operator is a bounded linear operator from the jet space $J_E^m C^{m,r}(\mathbb{R}^n)$ to the H\"older space $C^{m,r}(\mathbb{R}^n)$ for any natural number $m$ and $0<r\le 1$ \cite{Glaeser1958Etude}; (see also Chapter VI in \cite{Stein1970Singular}.) For Sobolev spaces, P. Shvartsman, in 2009,  proved the existence of a bounded linear extension operator from $J^0_EL^{1,p}(\mathbb{R}^n)$ to $L^{1,p}(\mathbb{R}^n)$ and from $J^0_EW^{1,p}(\mathbb{R}^n)$ to $W^{1,p}(\mathbb{R}^n)$ for $p>n$ by adapting the classical Whitney extension operator   \cite{Shvartsman2009Sobolev}. Later in 2017, he extended the above results to $J^{m-1}_E L^{m,p}(\mathbb{R}^n)$ and $J^{m-1}_E W^{m,p}(\mathbb{R}^n)$ where $m$ is an arbitrary positive integer and $p>n$ \cite{Shvartsman2017Whitney}. For other works relevant to the classical Whitney extension operator, see, e.g., Chapter VI in \cite{Stein1970Singular} and \cite{Frerick2016Whitney,Chang2017The}.

For the extension of function values on a set $E$, without known derivatives, H. Whitney first considered the space of $m$-th continuously differentiable functions $\mathbb{X}=C^m(\mathbb{R}^n)$ and solved the problem for $n=1$ \cite{Whitney1934Differentiable}. Then C. Fefferman solved the case for all dimensions $n$ in 2006 \cite{Fefferman2006Whitney}. For the work on homogeneous Sobolev space, in 2013, A. Israel proved the existence of a bounded linear extension operator from the trace space to the homogeneous Sobolev space $L^{2,p}(\mathbb{R}^2)$ for $p>n$, where they introduced the exponential decreasing path \cite{Israel2013A}. Later, in 2014,  C. Fefferman, A. Israel and G. Luli \cite{Fefferman2014Sobolev} extended the result and proved the existence of a bounded linear extension operator for $L^{m,p}(\mathbb{R}^n)$,  where $p\in(n,\infty)$ and $m$ is any positive integer. There are also related works for the range restriction extension problem. See, e.g., \cite{Fefferman2016Finiteness,Fefferman2017Interpolation,Fefferman2021C2} and the references therein.

\medskip 

All of the above results, {\it however}, deal with Sobolev spaces with integer regularity parameters. In this paper, we address the Whitney extension problem for the homogeneous Sobolev-Slobodeckij space $\mathbb{X}=L^{s,p}(\mathbb{R}^n)$, where  $p>n$ and   the   regularity parameter $s > 0$ is  {\it non-integer}. We will require the regularity parameter to satisfy $\frac{n}{p}<\{s\}$ in order to guarantee that the functions in $L^{s,p}(\mathbb{R}^n)$ belong to $C^{\lfloor s \rfloor}(\mathbb{R}^n)$, by the Sobolev embedding theorem. Here,  $\lfloor s\rfloor\in\mathbb{N}$ denotes the integer part of $s$,  and 
  $\{s\} \in [0,1)$  represents the fractional part.
\medskip

We note that a {\it key difference} between the current work and the related work of \cite{Shvartsman2009Sobolev,Shvartsman2017Whitney} is the consideration of {\it non-integer regularity parameter} 
$s$. While \cite{Shvartsman2009Sobolev,Shvartsman2017Whitney} rely on the estimation of sharp maximal functions, this work adopts an alternative approach, using the technique of integrating along exponentially decreasing paths in the Whitney decomposition.

\medskip 

Our main result can be stated as follows: 

\begin{Theorem}[Main Result]\label{main}
Let $n \in \mathbb{N}^*$, $s \in (0, \infty)$, and $p \in [1, \infty)$. If $\frac{n}{p} < \{s\}$, where $\{s\}$ is the fractional part of $s$, and $E \subseteq \mathbb{R}^n$, then there exists a bounded linear extension operator $T : J^{\lfloor s \rfloor}_E L^{s,p}(\mathbb{R}^n) \to L^{s,p}(\mathbb{R}^n)$ such that
\begin{equation*}
\|Tf\|_{L^{s,p}(\mathbb{R}^n)} \le C_{n,s,p} \|f\|_{J^{\lfloor s \rfloor}_E L^{s,p}(\mathbb{R}^n)},
\end{equation*}
where $C_{n,s,p}$ is a positive constant depending only on $n$, $s$, and $p$.
\end{Theorem}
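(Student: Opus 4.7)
The plan is to adapt the classical Whitney extension operator to the fractional-order setting. Without loss of generality I would assume $E$ is closed, replacing $E$ by $\overline{E}$ and using that any representative $F\in L^{s,p}(\mathbb{R}^n)$ lies in $C^{\lfloor s\rfloor}(\mathbb{R}^n)$ under the hypothesis $\{s\}>n/p$ by the Sobolev embedding, so that the jet extends uniquely and continuously to $\overline{E}$. Take a Whitney decomposition $\{Q_\nu\}$ of $\mathbb{R}^n\setminus E$ with a subordinate smooth partition of unity $\{\theta_\nu\}$, and for each $\nu$ pick $x_\nu^\ast\in E$ with $\mathrm{dist}(x_\nu^\ast,Q_\nu)$ comparable to $\mathrm{diam}(Q_\nu)$. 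Then define
\begin{equation*}
(Tf)(x) = \begin{cases} P_x(x) & \text{if } x \in E,\\ \sum_\nu \theta_\nu(x)\, P_{x_\nu^\ast}(x) & \text{if } x\notin E,\end{cases}
\end{equation*}
where $f=(P_x)_{x\in E}$. Linearity is evident from the formula, and classical Whitney theory (Chapter VI of \cite{Stein1970Singular}) yields $Tf\in C^{\lfloor s\rfloor}(\mathbb{R}^n)$ together with $(J^{\lfloor s\rfloor}_x Tf)_{x\in E}=f$, so $T$ is indeed an extension operator.

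What remains is to estimate the $L^{s,p}$ seminorm of $Tf$, which in this regime takes the Gagliardo form
\begin{equation*}
\|Tf\|_{L^{s,p}(\mathbb{R}^n)}^p=\sum_{|\alpha|=\lfloor s\rfloor}\int_{\mathbb{R}^n}\int_{\mathbb{R}^n}\frac{|\partial^\alpha Tf(x)-\partial^\alpha Tf(y)|^p}{|x-y|^{n+p\{s\}}}\,dx\,dy.
\end{equation*}
I would fix an arbitrary representative $F\in L^{s,p}(\mathbb{R}^n)$ with jet $f$ on $E$, establish $\|Tf\|_{L^{s,p}}\le C\|F\|_{L^{s,p}}$, and then infimize over $F$ to conclude. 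Splitting the integration domain according to the positions of $x$ and $y$ relative to $E$ gives several cases: when both lie in $E$ the bound is immediate from $\|F\|_{L^{s,p}}^p$; when they sit in the same or adjacent Whitney cubes the standard Whitney derivative estimates control $\partial^\alpha Tf$ in terms of values of $\partial^\alpha F$ at nearby points of $E$. In these regions, the Sobolev embedding $L^{s,p}(\mathbb{R}^n)\hookrightarrow C^{\lfloor s\rfloor,\{s\}-n/p}(\mathbb{R}^n)$, legitimate precisely because $\{s\}>n/p$, provides pointwise H\"older control of $\partial^\alpha F$ on $E$ that is indispensable for bounding differences of the Taylor polynomials $P_{x_\nu^\ast}$ appearing in the formula.

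The main obstacle is the region where $x$ and $y$ sit in Whitney cubes with diameters far smaller than $|x-y|$: here a chain of many intermediate cubes of varying scales separates them, and a naive telescoping diverges. To handle this I would import the exponentially decreasing path technique introduced by Israel \cite{Israel2013A} and extended in Fefferman, Israel, and Luli \cite{Fefferman2014Sobolev}: join $x$ and $y$ by a piecewise linear curve that steps through Whitney cubes whose diameters decay geometrically down to scale $|x-y|$ and then ascend symmetrically to $y$, and write $\partial^\alpha Tf(x)-\partial^\alpha Tf(y)$ as the line integral of $\partial^{\alpha+e_j}Tf$ along the curve. Inserting cube-by-cube Whitney estimates for $\partial^{\alpha+e_j}Tf$, the geometric decay of the path contributions must be matched against the Gagliardo kernel weight $|x-y|^{-(n+p\{s\})}$. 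The hard part of the proof will be the combinatorial bookkeeping behind this path construction and its interaction with the fractional-order kernel, in particular verifying that after applying H\"older's inequality, Fubini, and exchanging sums and integrals, the total contribution sums to a constant multiple of $\|F\|_{L^{s,p}}^p$, at which point infimizing over $F$ yields the desired bound by $\|f\|_{J^{\lfloor s\rfloor}_E L^{s,p}}$.
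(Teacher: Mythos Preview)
Your proposal captures the paper's strategy at a high level: both use the classical Whitney extension operator, split the Gagliardo double integral according to the positions of $x,y$ relative to $\overline{E}$ and to the Whitney cubes, and bring in exponentially decreasing paths to close the estimate. There is, however, a meaningful difference in \emph{how} the paths are deployed. You propose to connect $x$ and $y$ directly by a path through Whitney cubes and to write $\partial^\alpha Tf(x)-\partial^\alpha Tf(y)$ as a line integral of $\partial^{\alpha+e_j}Tf$ along it; your description of that path (``decay geometrically down to scale $|x-y|$ and then ascend symmetrically'') is garbled when the cubes at $x$ and $y$ are already much smaller than $|x-y|$, and the line-integral step requires care since $Tf$ has only $\lfloor s\rfloor$ continuous derivatives across $\partial E$. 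The paper takes a cleaner route: for the far-apart case $Q\not\leftrightarrow Q'$ it adds and subtracts $\partial^k F(x)$ and $\partial^k F(y)$, so that the middle term is absorbed directly into $\|F\|_{L^{s,p}}^p$ and everything else reduces to a single local quantity, the Taylor remainder $|\partial^i J^{\lfloor s\rfloor}_{x_P}F(t)-\partial^i F(t)|$ at a point $t\in A_P$ near a cube $P$. The exponentially decreasing path (Lemma~\ref{2}) then runs from $t$ \emph{toward} the anchor point $x_P\in\overline{E}$, not from $x$ to $y$, and the remainder is bounded by a telescoping sum along the path cubes $P_j$ using the Sobolev embedding on each $P_j$ (Lemma~\ref{12}). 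This buys two things: the path estimate is carried out on $F$ (whose $L^{s,p}$ norm is what you control) rather than on $Tf$, and the final bookkeeping collapses to a single sum interchange---counting, for each cube $P'$, how many starting cubes $P$ have a path through $P'$---rather than a two-sided path integral against the Gagliardo kernel.
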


{\it Technique of the proof:} To establish the result, we construct our operator by adapting techniques from the classical Whitney extension operator (see, for example, Chapter \text{VI} in \cite{Stein1970Singular}), and utilize the exponentially decreasing paths (see Lemma \ref{2}) for norm estimation. Specifically, we prove the theorem through the following steps. {\it First}, we decompose $\overline{E}^c$, which is   the complement of the closure of set $E$,   into a family of cubes $\mathscr{F}= \{Q_k\}$ by adapting the classical Whitney decomposition, and for each cube $P$ in $\mathscr{F}$ we choose a point $x_P\in \overline{E}$ which is also one of the closest points in $\overline{E}$ to the cube $P$ (and it might not be unique). {\it Then} we construct the extension operator $T$ by the classical method and show it is indeed bounded.

To demonstrate that  $T$ is bounded, 
we need to show that 
\begin{equation*}
\|Tf\|_{L^{s,p}(\mathbb{R}^n)}\lesssim\|F\|_{L^{s,p}(\mathbb{R}^n)},
\end{equation*}
for any $f$ and 
for any $F\in L^{s,p}(\mathbb{R}^n)$ satisfying $(J_x^{\lfloor s\rfloor}F)_{x\in E}=f$. 
  
For this purpose, we  decompose $\|Tf\|_{L^{s,p}(\mathbb{R}^n)}$ into several pieces and show that each piece is bounded by the sum of $\|F\|_{L^{s,p}(\mathbb{R}^n)}$ and another term whose main part is about the difference between $\partial^iF$ and its Taylor polynomial at some point $a_P$, i.e.,
\begin{equation}\label{difference}
J^{\lfloor s\rfloor-|i|}_{x_P}\partial^iF(a_P)-\partial^i F(a_P).
\end{equation}

 By the geometric properties of our Whitney decomposition (as detailed in Lemma  \ref{1} below), the line connecting $a_P$ and $x_P$ can be covered by countably many cubes $\{P_j\}_{j=0}^{\infty}$ from the decomposition $\mathscr{F}$, 
 which satisfies the exponentially decreasing property described in Lemma \ref{2}. 
 Using this approach, we bound the term  \eqref{difference} above by a series involving $\|F\|_{L^{s,p}(P_j)}$. Consequently, this provides an upper bound for $\|Tf\|_{L^{s,p}(\mathbb{R}^n)}$ in terms of $\|F\|_{L^{s,p}(\mathbb{R}^n)}$. This completes the proof of the main theorem.
\vskip.1in 

The organization of the paper is as follows: After introducing the necessary notations and preliminary results in Section \ref{notations}, we prove the main lemmas in Section \ref{main-lemmas}, which are essential for establishing the main result of this paper.  In Section \ref{proof}, we present the proof of Theorem \ref{main}. 
\vskip.1in 

{\bf Acknowledgement:}
The author would like to thank their advisor Azita Mayeli and co-advisor Arie Israel, for many helpful discussions. This research is partially supported by the CUNY grant DSRG (Round 17). 

\section{Notation and  preliminaries}\label{notations}

Let $\mathbb{N}=\{0,1,2,\dots\}$ and $\mathbb{N}^*=\{1,2,\dots\}$. For any $s\in\mathbb{R}$, let $\lfloor s\rfloor=\max\{m\in\mathbb{Z}:m\le s\}$ and $\{s\}=s-\lfloor s\rfloor$. In this paper, we always assume $n\in\mathbb{N}^*$, $s\in(0,\infty)\backslash\mathbb{N}^*$, $p\in[1,\infty)$. Let $S$ be $\mathbb{R}^n$ or any cube in $\mathbb{R}^n$ (See Definition \ref{def}), recall the homogeneous Sobolev-Slobodeckij space is given by 
\begin{equation*}
L^{s,p}(S):=
\left\{f\in L^1_{loc}(S,\mathbb{R}):\nabla^{\lfloor s\rfloor} f\in L^1_{loc}(S,\mathbb{R}^{n^{\lfloor s\rfloor}})\text{ and } \|f\|_{L^{s,p}(S)}<\infty\right\}
\end{equation*}
where the seminorm is defined as
\begin{equation}\label{e14}
\|f\|_{L^{s,p}(S)}:= \left(\int_{S}\int_{S}\frac{\big|\sum_{|k|=\lfloor s\rfloor}|\partial^kf(x)-\partial^kf(y)|\big|^p}{|x-y|^{n+\{s\}p}}dxdy\right)^{\frac{1}{p}}. 
\end{equation}
For any $m\in\mathbb{N}^*$, $F\in C^m(\mathbb{R}^n)$ and $x\in\mathbb{R}^n$, let $J^m_xF(t)=\sum_{|k|\le m}\frac{\partial^k F(x)}{k!}(t-x)^k$ which is the $m$-th Taylor polynomial of $F$ at point $x$. For any $E\subseteq\mathbb{R}^n$, if $\frac{n}{p}<\{s\}$,  we define the vector space
\begin{equation}\label{15}
J^{\lfloor s\rfloor}_EL^{s,p}(\mathbb{R}^n):=\{(J^{\lfloor s\rfloor}_xF)_{x\in E}:F\in L^{s,p}(\mathbb{R}^n)\}, 
\end{equation}
with seminorm
\begin{equation*}
\|f\|_{J^{\lfloor s\rfloor}_EL^{s,p}(\mathbb{R}^n)}:=\inf\{\|F\|_{L^{s,p}(\mathbb{R}^n)}:F\in L^{s,p}(\mathbb{R}^n) \text{ and } (J^{\lfloor s\rfloor}_xF)_{x\in E}=f\}. 
\end{equation*}

\begin{Remark}
When $\frac{n}{p} < \{s\}$, every function in $L^{s,p}(\mathbb{R}^n)$ has a unique $\lfloor s\rfloor$-th continuously differentiable representative, thanks to a Sobolev embedding theorem for $\mathbb{R}^n$ similar to that stated in Proposition \ref{5}. We will consistently use this $\lfloor s\rfloor$-th continuously differentiable function as the representative of the function class, and hence, we restrict and evaluate the representative on the set $E$.
\end{Remark}

\begin{Definition}\label{def}
We say that $Q$ is a cube in $\mathbb{R}^n$ if $Q = \prod_{i=1}^n [a_i - t, a_i + t]$, where $a_i \in \mathbb{R}$ and $t \in (0, \infty)$. If $Q = \prod_{i=1}^n [a_i - t, a_i + t]$ is a cube and $c \in (0, \infty)$, we define $cQ = \prod_{i=1}^n [a_i - ct, a_i + ct]$. For two cubes $Q$ and $Q'$, we write $Q \leftrightarrow Q'$ if $Q \cap Q' \ne \emptyset$. 
For a given cube $Q$ in $\mathbb{R}^n$, we define $\delta_Q$ as the diameter of $Q$, i.e., $\delta_Q = \sup \{ |x - y| : x, y \in Q \}$. For subsets $A$ and $B$ of $\mathbb{R}^n$, we denote the distance between $A$ and $B$ as $\text{dist}(A, B) = \inf \{ |x - y| : x \in A, y \in B \}$. For $x\in\mathbb{R}^n$ and $A\subseteq\mathbb{R}^n$,  we denote the distance between $x$ and $A$ as dist$(x,A)=\inf\{|x-y|:y\in A\}$. For any $E\subseteq\mathbb{R}^n$, $\overline{E}$, $E^\circ$ and $E^c$ denote the closure, interior and complement of $E$ respectively. For $x, y \in \mathbb{R}^n$, we let $[x, y] = \{ (1 - t)x + ty : t \in [0, 1] \}$ and $[x, y) = \{ (1 - t)x + ty : t \in [0, 1) \}$.
\end{Definition}

Let $A$ and $B$ be real. By $A\lesssim B$,  we mean  $A\le CB$,  where the $C$ is a constant  depending  only on $n,s,p$, unless  otherwise specified. Throughout the paper, the constants $C$ may vary from one line to another. 

We recall the definitions of  the H\"older spaces and the  Sobolev embedding theorem.

\begin{Definition}[Homogeneous H\"older spaces]
For $m \in \mathbb{N}$ and $0 < r < 1$, we define $\dot{C}^{m,r}(\mathbb{R}^n)$ as the space of functions $f \in C^m(\mathbb{R}^n)$ for which the following seminorm is finite: 
\begin{equation*} \|f\|_{\dot{C}^{m,r}(\mathbb{R}^n)} = \sup_{x, y \in \mathbb{R}^n, \, x \ne y} \frac{|\nabla^m f(x) - \nabla^m f(y)|}{|x - y|^r}.  \end{equation*}
 
\end{Definition}

\begin{Proposition}[Sobolev Embedding Theorem \cite{Leoni2023A}]\label{5}
Let $\frac{n}{p} < \{s\}$ and let $Q$ be a cube in $\mathbb{R}^n$. Then every $F \in L^{s,p}(Q)$ has a unique representative $\overline{F} \in \dot{C}^{\lfloor s \rfloor, \{s\} - \frac{n}{p}}(Q)$ such that
\begin{equation*} \|\overline{F}\|_{\dot{C}^{\lfloor s \rfloor, \{s\} - \frac{n}{p}}(Q)} \lesssim \|F\|_{L^{s,p}(Q)}. \end{equation*}
In particular, for any $k$ satisfying $|k|=\lfloor s\rfloor$, $x,y\in Q$, we have
\begin{equation*}
|\partial^k\overline{F}(x)-\partial^k\overline{F}(y)|\lesssim\|F\|_{L^{s,p}(Q)}|x-y|^{\{s\} - \frac{n}{p}}.
\end{equation*}
\end{Proposition}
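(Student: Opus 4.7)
The plan is to reduce the claim to a fractional Morrey embedding applied componentwise to the top-order derivatives, and then prove that embedding by a Campanato-type averaging argument. First, fix a multi-index $k$ with $|k| = \lfloor s \rfloor$ and set $g := \partial^k F$. Since $|\partial^k F(x) - \partial^k F(y)|$ is dominated by $\sum_{|\ell|=\lfloor s\rfloor}|\partial^\ell F(x) - \partial^\ell F(y)|$, the definition (\ref{e14}) immediately gives
\begin{equation*}
\int_Q\int_Q \frac{|g(x)-g(y)|^p}{|x-y|^{n+\{s\}p}}\,dx\,dy \;\le\; \|F\|_{L^{s,p}(Q)}^p.
\end{equation*}
It therefore suffices to show that any $g \in L^1_{\mathrm{loc}}(Q)$ satisfying this bound, under the hypothesis $\{s\}p > n$, admits a representative $\overline{g}$ that is Hölder continuous on $Q$ with exponent $\{s\} - n/p$ and seminorm controlled by $\|F\|_{L^{s,p}(Q)}$.

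The core is a Campanato-type averaging estimate. For $x_0 \in Q$ and $r > 0$ set $B_r(x_0) := B(x_0,r) \cap Q$ and $\langle g\rangle_{x_0,r} := |B_r(x_0)|^{-1}\int_{B_r(x_0)} g$. Writing $\langle g\rangle_{x_0,r} - \langle g\rangle_{x_0,r/2}$ as a double average of $g(u)-g(v)$ over $B_r(x_0) \times B_{r/2}(x_0)$, applying Jensen's inequality to $t \mapsto t^p$, and using the trivial bound $|u-v| \le 2r$ on this product domain, yields
\begin{equation*}
\bigl|\langle g\rangle_{x_0,r} - \langle g\rangle_{x_0,r/2}\bigr|^p \;\lesssim\; \frac{r^{n+\{s\}p}}{r^{2n}} \int_{B_r(x_0)}\!\!\int_{B_r(x_0)} \frac{|g(u)-g(v)|^p}{|u-v|^{n+\{s\}p}}\,du\,dv \;\lesssim\; r^{\{s\}p - n}\,\|F\|_{L^{s,p}(Q)}^p.
\end{equation*}
Taking $p$-th roots and summing over dyadic radii $r/2^j$ produces a geometric series $\sum_{j\ge 0}(r/2^j)^{\{s\}-n/p}$, which converges precisely because $\{s\}-n/p > 0$. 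Hence $\{\langle g\rangle_{x_0,r/2^j}\}_j$ is Cauchy; at each Lebesgue point of $g$ its limit equals $\overline{g}(x_0)$, and $|\overline{g}(x_0) - \langle g\rangle_{x_0,r}| \lesssim r^{\{s\}-n/p}\|F\|_{L^{s,p}(Q)}$.

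For two points $x_0,y_0 \in Q$ with $r = |x_0-y_0|$, I would compare $\langle g\rangle_{x_0,2r}$ and $\langle g\rangle_{y_0,2r}$ by the same Jensen calculation over a common subset $B \subset B(x_0,2r)\cap B(y_0,2r)\cap Q$ of volume $\gtrsim r^n$; this yields $|\langle g\rangle_{x_0,2r}-\langle g\rangle_{y_0,2r}| \lesssim r^{\{s\}-n/p}\|F\|_{L^{s,p}(Q)}$. Combined with the two Lebesgue-point estimates above via the triangle inequality, this gives the Hölder bound on a dense subset of $Q$, which extends uniquely to all of $Q$ by uniform continuity. Applying this argument to every $k$ with $|k| = \lfloor s \rfloor$ produces continuous representatives of all top derivatives, and standard line-integration then promotes $F$ to $C^{\lfloor s\rfloor}$ regularity, giving the sought $\overline{F} \in \dot C^{\lfloor s\rfloor,\{s\}-n/p}(Q)$. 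Uniqueness is automatic since two continuous representatives agreeing almost everywhere coincide.

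The main substantive step — and the only place where the hypothesis $\{s\}p > n$ is actually used — is the telescoping of dyadic average differences; without the strict inequality $\{s\}-n/p > 0$, the geometric series diverges and the Campanato method collapses. A secondary but essentially routine issue is the cube geometry: when $x_0$ lies near $\partial Q$ the ball $B(x_0,r)$ must be intersected with $Q$, and one must verify this intersection, as well as the common ball $B$ used in the two-point comparison, still have volume $\gtrsim r^n$. For cubes this is immediate from a corner-covering argument, and the implicit constants depend only on $n,s,p$.
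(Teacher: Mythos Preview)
The paper does not supply its own proof of this proposition; it is simply cited from Leoni's book \cite{Leoni2023A}. Your Campanato-type argument is a correct and standard route to the fractional Morrey embedding: the Jensen estimate on dyadic-average differences, the telescoping that requires $\{s\}p>n$, and the two-point comparison via an overlapping ball are all sound, and the cube-boundary volume lower bounds you flag as routine are indeed immediate from convexity. The one sentence you leave vague is the closing step, ``standard line-integration then promotes $F$ to $C^{\lfloor s\rfloor}$ regularity'': this is true but merits a clause --- once every top-order weak derivative has a continuous representative, integrating along coordinate segments inside the convex cube $Q$ yields continuous representatives of all lower-order derivatives, hence a genuine $C^{\lfloor s\rfloor}$ representative of $F$.
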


\begin{Proposition}\label{p1}
Let $m\in\mathbb{N}$, $F\in C^m(\mathbb{R}^n)$.  For any $x,x_0\in\mathbb{R}^n$, there exists $t\in(0,1)$ such that
\begin{equation*}
F(x)-J_{x_0}^mF(x)=\sum_{|k|=m}\frac{1}{k!}\big(\partial^kF(tx+(1-t)x_0)-\partial^kF(x_0)\big)(x-x_0)^{k}.
\end{equation*}
\end{Proposition}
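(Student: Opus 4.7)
The plan is to reduce the multivariable identity to a one-dimensional Taylor expansion along the segment joining $x_0$ to $x$. I would introduce the auxiliary function $g:[0,1]\to\mathbb{R}$ defined by $g(t)=F(x_0+t(x-x_0))$. Since $F\in C^m(\mathbb{R}^n)$, iterating the chain rule shows $g\in C^m([0,1])$ with
\begin{equation*}
g^{(j)}(t)=\sum_{|k|=j}\frac{j!}{k!}\,\partial^k F\bigl(x_0+t(x-x_0)\bigr)\,(x-x_0)^k,\qquad 0\le j\le m.
\end{equation*}
Evaluating at $t=0$ and summing over $0\le j\le m$ recovers the Taylor polynomial: $\sum_{j=0}^{m}g^{(j)}(0)/j!=J_{x_0}^m F(x)$.

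Next, I would apply the classical one-dimensional Lagrange form of Taylor's theorem to $g$ on $[0,1]$, which furnishes $t\in(0,1)$ with
\begin{equation*}
g(1)=\sum_{j=0}^{m-1}\frac{g^{(j)}(0)}{j!}+\frac{g^{(m)}(t)}{m!}.
\end{equation*}
Using $g(1)=F(x)$ and subtracting the $j=m$ term from both sides, this rearranges to
\begin{equation*}
F(x)-J_{x_0}^m F(x)=\frac{g^{(m)}(t)-g^{(m)}(0)}{m!},
\end{equation*}
and substituting the chain-rule expression for $g^{(m)}$ at the two endpoints gives precisely the formula in the statement.

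The only piece of genuine bookkeeping is the combinatorial identity for $g^{(j)}$, which I would verify by induction on $j$ via the multinomial theorem: the coefficient $j!/k!$ counts the number of ordered $j$-tuples $(i_1,\dots,i_j)\in\{1,\dots,n\}^j$ whose associated multi-index equals $k$. The degenerate case $x=x_0$ is automatic, since both sides then vanish for every $t\in(0,1)$; thus there is no essential obstacle beyond recording the chain-rule expansion in multi-index form.
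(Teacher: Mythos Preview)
Your proposal is correct and follows essentially the same route as the paper: both introduce the auxiliary one-variable function $t\mapsto F(tx+(1-t)x_0)$, compute its derivatives via the multinomial chain rule, apply the one-dimensional Lagrange remainder, and read off the multivariable identity. The only cosmetic difference is that the paper first isolates the $n=1$ case explicitly before reducing the general case to it, whereas you invoke the one-dimensional Taylor theorem directly.
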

\begin{proof}
We first prove the result in the one-dimensional case, i.e., $n = 1$. Let $x, x_0 \in \mathbb{R}$. Then, by the Lagrange remainder formula (see \cite{Rudin1976Principles}), there exists $t \in (0,1)$ such that 
\begin{equation*}
\begin{split}
F(x)-J^m_{x_0}F(x)=&F(x)-J^{m-1}_{x_0}F(x)-\frac{1}{m!}\partial^mF(x)(x-x_0)^m\\
=&\frac{1}{m!}\big(\partial^mF(tx+(1-t)x_0)-\partial^mF(x_0)\big)(x-x_0)^m.
\end{split}
\end{equation*}

In the higher-dimensional case, let $\varphi(t) = F(tx+(1-t)x_0)$. Then $\varphi \in C^m(\mathbb{R})$, and by the one-dimensional result, there exists $t \in (0,1)$ such that 
\begin{equation}\label{e1}
\varphi(1)-J^m_{0}\varphi(1)=\frac{1}{m!}(\partial^m\varphi(t)-\partial^m\varphi(0)).
\end{equation}
For any $j\in\{0,1,\dots, m\}$ and $t\in\mathbb{R}$, by chain rule, we have
\begin{equation*}
\partial^j\varphi(t)=\left(((x-x_0)\cdot\nabla)^jF\right)(tx+(1-t)x_0)=\sum_{|k|=j}\frac{j!}{k!}\partial^kF(tx+(1-t)x_0)(x-x_0)^k.
\end{equation*}
Using the above equation, we get
\begin{equation*}
\begin{split}
J^m_{0}\varphi(1)=\sum_{j=0}^m\frac{1}{j!}\partial^j\varphi(0)
=\sum_{j=0}^m\sum_{|k|=j}\frac{1}{k!}\partial^kF(x_0)(x-x_0)^k
=\sum_{|k|\le m}\frac{1}{k!}\partial^kF(x_0)(x-x_0)^k
=J^m_{x_0}F(x),
\end{split}
\end{equation*}
and
\begin{equation*}
\partial^m\varphi(t)-\partial^m\varphi(0)=\sum_{|k|=m}\frac{m!}{k!}
(\partial^kF(tx+(1-t)x_0)-\partial^kF(x_0))(x-x_0)^k.
\end{equation*}
We complete the proof of the proposition by substituting the last two equations into equation (\ref{e1}) and using the fact that $\varphi(1) = F(x)$. 
\end{proof}
The following Lemmas \ref{1}-\ref{3} are based on Whitney decompositions, with proofs available in \cite{Stein1970Singular}. We have made slight modifications to the original proofs, which result in different constants in Lemma \ref{1}(\ref{i5}) and Lemma \ref{6} for our subsequent purposes. 

\begin{Lemma}\label{1}
Let $D$ be any nonempty closed proper subset in $\mathbb{R}^n$. Then there exists a collection of closed cubes $\mathscr{F}=\{Q_k\}_{k=1}^{\infty}$ such  that the following hold: 
\begin{enumerate}[(1)]
\item $\bigcup_{k}Q_k=D^c$.

\item The interiors of $Q_k$ are mutually disjoint.  

\item\label{i5} $10\delta_{Q_k}\le$ dist$(Q_k,D)\le 22\delta_{Q_k}$ for any $k$. \label{eq2}

\item For every $k$, there exists $m\in\mathbb{Z}$ and integers $(a_i)_{i=1}^n\in\mathbb{Z}^n$ such that $Q_k=\prod_{i=1}^n[a_i2^{-m},(a_i+1)2^{-m}]$.
\end{enumerate}
\end{Lemma}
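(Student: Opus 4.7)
\textbf{Proof plan for Lemma~\ref{1}.} The plan is to follow the classical Whitney decomposition (Chapter VI of \cite{Stein1970Singular}), adjusting the selection window so that the resulting distance-to-diameter ratios fall inside $[10,22]$ rather than the standard $[1,4]$. Throughout I work with the standard dyadic mesh of $\mathbb{R}^n$: at generation $m\in\mathbb{Z}$ the cubes have side $2^{-m}$ and diameter $\sqrt{n}\,2^{-m}$, and two dyadic cubes either have disjoint interiors or are nested.

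\textbf{Step 1 (pointwise selection).} For each $x\in D^c$ we have $\text{dist}(x,D)>0$ because $D$ is closed, so there is a unique integer $m=m(x)$ such that
$\,11\sqrt{n}\,2^{-m} \le \text{dist}(x,D) < 22\sqrt{n}\,2^{-m}.\,$
I would let $Q(x)$ be a dyadic cube of side $2^{-m(x)}$ containing $x$ (if $x$ sits on a face, pick any one such cube), and set $\mathscr{F}_0=\{Q(x):x\in D^c\}$. Using the trivial bounds $\text{dist}(x,D)-\delta_{Q(x)} \le \text{dist}(Q(x),D) \le \text{dist}(x,D)$ together with the window above yields $10\delta_{Q(x)} \le \text{dist}(Q(x),D) \le 22\delta_{Q(x)}$, so (3) holds for every cube in $\mathscr{F}_0$. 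The collection $\mathscr{F}_0$ obviously covers $D^c$ and consists of dyadic cubes, handling (1) and (4).

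\textbf{Step 2 (extract a maximal subfamily for disjointness).} Cubes in $\mathscr{F}_0$ can still be nested, since many points produce cubes at different generations. I would replace $\mathscr{F}_0$ by the subfamily $\mathscr{F}$ of its maximal elements under set inclusion. For any $Q\in \mathscr{F}_0$ containing a fixed point $x$, the lower bound $10\delta_Q\le \text{dist}(Q,D)\le \text{dist}(x,D)$ forces $\delta_Q \le \text{dist}(x,D)/10$, so only finitely many dyadic sizes are possible for cubes of $\mathscr{F}_0$ containing $x$; hence a maximal cube exists. The maximal subfamily still covers $D^c$, its members still satisfy (3) and (4), and any two of its members are dyadic and incomparable under inclusion, hence have disjoint interiors. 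Countability of $\mathscr{F}$ is immediate from countability of the dyadic grid.

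\textbf{Main obstacle.} The argument is largely bookkeeping once the selection window is chosen; the one delicate calibration is that the ratio of the upper and lower endpoints of the window on $\text{dist}(x,D)$ must be at least the dyadic scaling factor $2$ (so that some $m$ always exists), while simultaneously the lower endpoint must exceed the upper by at least the $\delta_Q$ slack needed to convert $\text{dist}(x,D)$ into $\text{dist}(Q,D)$ with the prescribed constants. The choice $11\sqrt{n}\,2^{-m}\le\text{dist}(x,D)<22\sqrt{n}\,2^{-m}$ saturates both constraints simultaneously and produces exactly the constants $10$ and $22$ demanded by (3).
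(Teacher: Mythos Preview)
Your proposal is correct and is precisely the approach the paper points to: the paper does not spell out a proof of Lemma~\ref{1} but refers to the Whitney decomposition in \cite{Stein1970Singular} with ``slight modifications \ldots\ which result in different constants,'' and your argument is exactly Stein's construction with the selection window recalibrated (to $[11\sqrt{n}\,2^{-m},\,22\sqrt{n}\,2^{-m})$) so as to produce the constants $10$ and $22$ in part~(3). The calibration check and the passage to maximal dyadic cubes are carried out correctly.
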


\begin{Lemma}\label{6}
Let $Q$ and $Q'\in \mathscr{F}$ be in the decomposition outlined in Lemma \ref{1}.
\begin{enumerate}[(1)]
\item\label{i7} If $Q\leftrightarrow Q'$, then
\begin{equation*}
\frac{1}{2}\delta_{Q'}\le\delta_Q\le 2\delta_{Q'}.
\end{equation*}
\item\label{i8} If $Q\not\leftrightarrow Q'$, then $Q'\bigcap (2Q)^\circ=\emptyset$, therefore $Q'\not\leftrightarrow 1.1Q$.
\end{enumerate}
\end{Lemma}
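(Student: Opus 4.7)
Both parts hinge on the distance estimate in Lemma \ref{1}(\ref{i5}) combined with the dyadic structure in Lemma \ref{1}(4). For part (\ref{i7}), I would pick a point $x \in Q \cap Q'$ and apply Lemma \ref{1}(\ref{i5}) to estimate $\mathrm{dist}(x, D)$ from both sides. Since $x \in Q$, we have $\mathrm{dist}(x, D) \ge \mathrm{dist}(Q, D) \ge 10 \delta_Q$. On the other hand, since $x \in Q'$, the triangle inequality gives, for any $y \in Q'$ and $z \in D$, $|x - z| \le \delta_{Q'} + |y - z|$; taking infima yields $\mathrm{dist}(x, D) \le \delta_{Q'} + \mathrm{dist}(Q', D) \le 23 \delta_{Q'}$. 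Combining these bounds gives $\delta_Q \le \tfrac{23}{10} \delta_{Q'}$, and the symmetric argument gives $\delta_{Q'} \le \tfrac{23}{10} \delta_Q$. Because $\delta_Q / \delta_{Q'}$ is an integer power of $2$ by Lemma \ref{1}(4), these bounds sharpen to the claimed $\delta_Q \le 2\delta_{Q'}$ and $\delta_{Q'} \le 2\delta_Q$.

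For part (\ref{i8}), I would argue the contrapositive: suppose $x \in Q' \cap (2Q)^\circ$ and show $Q \leftrightarrow Q'$. Since $x \in 2Q$, a coordinate-wise computation gives $\mathrm{dist}(x, Q) \le \delta_Q / 2$, hence $\mathrm{dist}(x, D) \ge \mathrm{dist}(Q, D) - \mathrm{dist}(x, Q) \ge 9.5 \delta_Q$. Combined with $\mathrm{dist}(x, D) \le 23 \delta_{Q'}$ from $x \in Q'$, and the opposite bound $\mathrm{dist}(x, D) \le 2\delta_Q + \mathrm{dist}(Q, D) \le 24 \delta_Q$ (via a nearest pair $y_0 \in Q$, $z \in D$ together with $|x - y_0| \le \mathrm{diam}(2Q) = 2\delta_Q$), Lemma \ref{1}(4) then forces $\delta_Q / \delta_{Q'} \in \{1/2, 1, 2\}$. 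A short case analysis on these three ratios finishes the argument: writing $Q = \prod_i [a_i, a_i + L]$ and $Q' = \prod_i [b_i, b_i + L']$ with corners on a common dyadic grid of spacing $\min(L, L')$, the condition $Q' \cap (2Q)^\circ \ne \emptyset$ translates to coordinate inequalities of the form $-\tfrac{3L}{2} < a_i - b_i < L + L'$, and the integrality constraint from the grid forces $[a_i, a_i + L] \cap [b_i, b_i + L']$ to be non-empty for every $i$, hence $Q \cap Q' \ne \emptyset$. The final claim that $Q' \not\leftrightarrow 1.1 Q$ then follows immediately because $1.1 Q \subset (2Q)^\circ$ (since $1.1 < 2$), so $Q' \cap 1.1Q \subset Q' \cap (2Q)^\circ = \emptyset$.

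\textbf{Main obstacle.} The step I expect to require the most care is the case analysis in part (\ref{i8}). One must translate the condition $Q' \cap (2Q)^\circ \ne \emptyset$ into strict coordinate inequalities, and then check in each of the three size-ratio regimes that the dyadic integrality constraint is tight enough to force the closed cubes $Q$ and $Q'$ to share at least one boundary point. Everything else is a direct application of Lemma \ref{1}(\ref{i5}) and the triangle inequality.
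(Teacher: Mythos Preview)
The paper does not supply its own proof of this lemma; it states that Lemmas~\ref{1}--\ref{3} are standard Whitney-decomposition facts with proofs in Stein's book, adjusted only in the constants. Your approach is precisely the standard one carried out with the constants $10$ and $22$ of Lemma~\ref{1}(\ref{i5}): part~(\ref{i7}) via the two-sided estimate of $\mathrm{dist}(x,D)$ at a common point together with the dyadic constraint, and part~(\ref{i8}) via the contrapositive size comparison followed by a coordinate-wise integrality check on the three possible ratios $\delta_Q/\delta_{Q'}\in\{1/2,1,2\}$. This is correct.

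One minor slip worth fixing when you write it out: from $b_i+L'>a_i-L/2$ the upper bound should be $a_i-b_i<L'+L/2$, not $L+L'$. Fortunately the looser bound you wrote does not change which integers lie in the relevant open interval in any of the three cases, so the case analysis still closes as you describe.
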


\begin{Lemma}\label{14} 
There exists a constant $C$, dependent solely on the dimension $n$, such that for every cube $Q \in \mathscr{F}$ in the decomposition specified in Lemma \ref{1}, there are at most $C$ cubes that intersect $Q$.
\end{Lemma}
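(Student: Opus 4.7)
The plan is to combine the side-length comparability from Lemma~\ref{6}(\ref{i7}) with the disjointness of cube interiors from Lemma~\ref{1}(2), and conclude by a volume-counting argument.

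First, I would fix $Q \in \mathscr{F}$ and consider any cube $Q' \in \mathscr{F}$ with $Q' \leftrightarrow Q$. By Lemma~\ref{6}(\ref{i7}) the diameters satisfy $\tfrac{1}{2}\delta_Q \le \delta_{Q'} \le 2\delta_Q$, hence the side lengths are comparable up to a factor of $2$. Since $Q \cap Q' \ne \emptyset$, every point of $Q'$ lies within distance $\delta_{Q'} \le 2\delta_Q$ of $Q$. Consequently there is a constant $c = c(n)$ (for instance $c = 1 + 4\sqrt{n}$ works, accounting for the $\sqrt{n}$ relating diameter and side length) such that
\begin{equation*}
Q' \subseteq cQ \qquad \text{for every } Q' \in \mathscr{F} \text{ with } Q' \leftrightarrow Q.
\end{equation*}

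Second, I would exploit the disjointness of the interiors of the cubes in $\mathscr{F}$ (Lemma~\ref{1}(2)). Let $\mathcal{N}(Q) = \{Q' \in \mathscr{F} : Q' \leftrightarrow Q\}$. Adding $n$-dimensional Lebesgue measures and using the containment from the previous step gives
\begin{equation*}
\sum_{Q' \in \mathcal{N}(Q)} |Q'| \;=\; \Bigl|\,\bigcup_{Q' \in \mathcal{N}(Q)} Q'\,\Bigr| \;\le\; |cQ| \;=\; c^n\,|Q|.
\end{equation*}
On the other hand, the side-length comparison gives $|Q'| \ge 2^{-n}|Q|$ for each $Q' \in \mathcal{N}(Q)$. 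Combining these two inequalities, $\# \mathcal{N}(Q) \le (2c)^n$, which depends only on $n$.

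I do not anticipate a real obstacle here; the only bookkeeping to get right is converting between diameter and side length (a factor of $\sqrt{n}$) when fixing the dilation constant $c$, and verifying that the chosen $c$ genuinely contains every neighboring cube. Everything else is immediate from the two Whitney lemmas already in place.
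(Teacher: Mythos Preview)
Your proposal is correct and is precisely the standard volume-counting argument for this Whitney lemma; the paper does not supply its own proof but defers to Stein~\cite{Stein1970Singular}, whose argument is exactly the one you outline. There is nothing to add.
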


\begin{Lemma}\label{9}
There exists a constant $C$, dependent solely on the dimension $n$, such that any point in $D^c$ lies within a neighborhood that intersects at most $C$ cubes of the form $1.1Q$, where $Q \in \mathscr{F}$.
\end{Lemma}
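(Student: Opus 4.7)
The plan is to prove this via a volume-counting argument. Fix $x \in D^c$ and set $d := \mathrm{dist}(x, D)$, which is strictly positive since $D$ is closed. I claim that the neighborhood $U = B(x, d/100)$ meets $1.1Q$ for at most $C(n)$ cubes $Q \in \mathscr{F}$.

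First I will control the diameter of any such $Q$. Given $y \in 1.1Q \cap U$, the definition of $1.1Q$ places $y$ within $\ell_\infty$-distance $0.05$ times the side-length of $Q$ from $Q$, so there exists $y' \in Q$ with $|y - y'| \le 0.05 \delta_Q$ in Euclidean norm, and hence $|x - y'| \le d/100 + 0.05 \delta_Q$. On the one hand, Lemma \ref{1}(\ref{i5}) combined with $\mathrm{dist}(Q,D) \le \mathrm{dist}(y',D) \le |y'-x| + d$ gives
\begin{equation*}
10 \delta_Q \le \mathrm{dist}(Q, D) \le d/100 + 0.05\delta_Q + d,
\end{equation*}
yielding $\delta_Q \lesssim d$. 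On the other hand, choosing $q^* \in Q$ and $z^* \in D$ with $|q^* - z^*|$ arbitrarily close to $\mathrm{dist}(Q,D) \le 22 \delta_Q$ gives $\mathrm{dist}(y',D) \le |y' - q^*| + |q^* - z^*| \le 23 \delta_Q$, so
\begin{equation*}
d \le |x - y'| + \mathrm{dist}(y', D) \le d/100 + 0.05\delta_Q + 23\delta_Q,
\end{equation*}
which forces $\delta_Q \gtrsim d$. Thus $c_1 d \le \delta_Q \le c_2 d$ for constants $c_1, c_2$ depending only on $n$.

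With the size of $Q$ pinned down, every such cube lies in some ball $B(x, C' d)$ with $C' = C'(n)$, because $Q \subseteq \overline{B}(y', \delta_Q)$ and $|x - y'|$ is $O(d)$. Since the cubes of $\mathscr{F}$ have pairwise disjoint interiors, and each cube considered has volume $(\delta_Q / \sqrt{n})^n \gtrsim d^n$, comparing the sum of their volumes against $\mathrm{vol}(B(x, C'd)) \lesssim d^n$ bounds the number of such cubes by a constant $C = C(n)$, which is exactly the claim.

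The proof offers no serious obstacle: it is a direct geometric volume-counting argument, with no need to invoke the dyadic structure from Lemma \ref{1}(4) or the adjacency Lemmas \ref{6}--\ref{14}. The only point requiring care is to choose the neighborhood radius (here $d/100$) as a sufficiently small fraction of $d$, calibrated against the constants $10$ and $22$ in Lemma \ref{1}(\ref{i5}), so that both the upper and lower bounds on $\delta_Q$ come out of order $d$ rather than degenerating.
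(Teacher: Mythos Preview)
Your proof is correct and follows the standard volume-counting argument for the bounded overlap of Whitney cubes, which is precisely the approach in Stein's book that the paper cites for this lemma. The paper does not supply its own proof, so there is nothing further to compare.
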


\begin{Lemma}\label{3} For a given decomposition $\mathscr{F}$ outlined in Lemma \ref{1}, there is a family of functions $\{\theta_Q\}_{Q\in \mathscr F}\subseteq C^\infty(\mathbb{R}^n)$,  with the following properties:
\begin{enumerate}[(1)]
\item $0\le \theta_Q(x)\le 1$ for all    $x\in \mathbb{R}^n$.
\item\label{i11} $\theta_Q(x)=0$ outside $1.1Q$.
\item\label{i4} $\sum_Q\theta_Q(x)=1$ for all $x\in D^c$. And 
\item\label{i6}  
$ |\partial^k\theta_Q(x)| \leq C_{n,k} \delta_Q^{-|k|} \text{ for all } Q \in \mathscr{F}, \, k \in \mathbb{N}^n, \text{ and } x \in \mathbb{R}^n,$ 
where $C_{n,k}$ is a constant that depends only on $n$ and $k$.
\end{enumerate}
\end{Lemma}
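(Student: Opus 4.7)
The plan is to build $\theta_Q$ via the classical normalization trick: first manufacture an unnormalized smooth bump $\phi_Q$ adapted to each Whitney cube $Q$, sum them into a single function $\Phi$, and then set $\theta_Q = \phi_Q/\Phi$.

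First I would fix, once and for all, a bump function $\phi\in C_c^\infty(\mathbb{R}^n)$ with $0\le \phi\le 1$, $\phi\equiv 1$ on the unit cube $[-1,1]^n$, and $\phi\equiv 0$ outside $1.05\cdot[-1,1]^n$. For each $Q = \prod_{i=1}^n[a_i-t_Q,a_i+t_Q]\in\mathscr{F}$, define
\begin{equation*}
\phi_Q(x) := \phi\!\left(\frac{x_1-a_1}{t_Q},\dots,\frac{x_n-a_n}{t_Q}\right),
\end{equation*}
so that $\phi_Q\equiv 1$ on $Q$, $\phi_Q\equiv 0$ outside $1.1Q$ (provided $\phi$ is chosen with a cut-off slightly below $1.1$, as above), and by the chain rule $|\partial^k\phi_Q(x)|\le C_{n,k}t_Q^{-|k|}\lesssim \delta_Q^{-|k|}$.

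Next I would define $\Phi(x) := \sum_{Q\in\mathscr{F}}\phi_Q(x)$. Using property \eqref{i5} of Lemma \ref{1}, the expanded cube $1.1Q$ still lies in $D^c$, so only terms with $x\in 1.1Q\subseteq D^c$ contribute; by Lemma \ref{9} the sum is locally finite on $D^c$, hence $\Phi\in C^\infty(D^c)$. Moreover every $x\in D^c$ lies in some $Q\in\mathscr{F}$, so $\phi_Q(x)=1$ and $\Phi(x)\ge 1$ there. I would then set
\begin{equation*}
\theta_Q(x) := \begin{cases} \phi_Q(x)/\Phi(x), & x\in 1.1Q,\\ 0, & x\notin 1.1Q.\end{cases}
\end{equation*}
Since $1.1Q\subseteq D^c$, the quotient is smooth on $1.1Q$, and $\phi_Q$ (hence $\theta_Q$) vanishes in a neighborhood of $\partial(1.1Q)$, so $\theta_Q\in C^\infty(\mathbb{R}^n)$. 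Properties (1)--(3) of the lemma are now immediate: (1) and (\ref{i11}) are built in, and (\ref{i4}) follows from $\sum_Q\phi_Q=\Phi$ on $D^c$.

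The one genuinely delicate point, and the step I expect to be the main obstacle, is the derivative bound (\ref{i6}). I would estimate $\partial^k\theta_Q$ by the Leibniz/Fa\`a di Bruno expansion
\begin{equation*}
\partial^k\theta_Q = \partial^k\!\left(\phi_Q\cdot \tfrac{1}{\Phi}\right) = \sum_{\alpha\le k}\binom{k}{\alpha}\partial^\alpha\phi_Q\,\partial^{k-\alpha}\!\left(\tfrac{1}{\Phi}\right),
\end{equation*}
and observe that on the support of $\phi_Q$ only cubes $Q'$ with $Q'\leftrightarrow 1.1Q$ contribute to $\Phi$. By Lemma \ref{6}(\ref{i7}) any such $Q'$ satisfies $\delta_{Q'}\asymp \delta_Q$, so $|\partial^\beta\Phi(x)|\lesssim \delta_Q^{-|\beta|}$ uniformly in $x\in\mathrm{supp}\,\phi_Q$, with an implicit constant controlled by the universal bound on the number of such $Q'$ (Lemma \ref{9}). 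Combined with $\Phi\ge 1$ and the standard expansion of $\partial^{k-\alpha}(1/\Phi)$ as a finite sum of products of $\partial^\beta\Phi$'s divided by powers of $\Phi$, this yields $|\partial^{k-\alpha}(1/\Phi)(x)|\lesssim \delta_Q^{-|k-\alpha|}$. Multiplying by $|\partial^\alpha\phi_Q|\lesssim \delta_Q^{-|\alpha|}$ and summing gives $|\partial^k\theta_Q(x)|\lesssim \delta_Q^{-|k|}$, with a constant depending only on $n$ and $k$, as required.
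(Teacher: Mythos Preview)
Your construction is exactly the standard one the paper defers to (Stein, Chapter~VI), so the approach matches. One small imprecision worth fixing: at a point $x\in\operatorname{supp}\phi_Q\subseteq 1.1Q$, the cubes $Q'$ contributing to $\Phi(x)$ are those with $x\in 1.1Q'$, i.e.\ $1.1Q'\cap 1.1Q\neq\emptyset$, which is weaker than $Q'\leftrightarrow 1.1Q$; in particular Lemma~\ref{6}(\ref{i7}) does not apply directly since you do not know $Q\leftrightarrow Q'$. The comparability $\delta_{Q'}\asymp\delta_Q$ you need follows instead from Lemma~\ref{1}(\ref{i5}): for $x\in 1.1Q\cap 1.1Q'$ one has $\operatorname{dist}(x,D)\asymp\delta_Q$ and $\operatorname{dist}(x,D)\asymp\delta_{Q'}$. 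With that patch the derivative bound~(\ref{i6}) goes through as you wrote.
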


\begin{Remark}\label{11}
By Lemma \ref{3} (\ref{i6}), for any $s > 0$, $|k| \le \lfloor s \rfloor + 1$, $Q \in \mathscr{F}$, and $x \in \mathbb{R}^n$, we have

\begin{equation*}
|\partial^k\theta_Q(x)| \leq\big(\max_{|j|\le\lfloor s \rfloor + 1}C'_{n,j}\big)\delta_Q^{-|k|}=C_{n,s} \delta_Q^{-|k|},
\end{equation*}
where the constant $C_{n,s}=\max_{|j|\le\lfloor s \rfloor + 1}C'_{n,j}$ depends only on $n$ and $s$.
\end{Remark}

We have the following definition and lemma.

\begin{Definition}\label{24}
Associated to any cube $P\in\mathscr{F}$ in the decomposition specified in Lemma \ref{1}, let $x_P\in D$ such that $dist(P,D)=dist(x_P,P)$ and fix it. (Such $x_P$ exists because $D$ is closed.)
\end{Definition}

\begin{Lemma}\label{10}
Let $D$ be a closed subset of $\mathbb{R}^n$ and let $\mathscr{F}$ be a decomposition outlined in Lemma \ref{1} with respect to $D$. Let $(\theta_P)_{P\in\mathscr{F}}$ be a family of functions satisfying the conditions in Lemma \ref{3}. For any $P\in\mathscr{F}$, let  $x_P$ be defined in Definition \ref{24}. Then for any $m\in\mathbb{N}$ and $F\in C^m(\mathbb{R}^n)$, the following function
\begin{equation*}
G(x)=\begin{cases}
\sum_{P\in\mathscr{F}}\theta_P(x)J_{x_P}^{m}F(x),&x\in\mathbb{R}^n\backslash D;\\
F(x),&x\in D.
\end{cases}
\end{equation*}
is well defined. $G\in C^m(\mathbb{R}^n)$ and $\partial^i G(x)=\partial^i F(x)$ for any $x\in D$ and for all multi-indices 
  $i$ satisfying $|i|\le m$.
\end{Lemma}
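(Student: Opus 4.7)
My plan is to proceed in three stages: smoothness of $G$ on $D^c$; a pointwise estimate showing $\partial^i G$ approaches $\partial^i F$ at points of $D$; and an appeal to the classical Whitney extension theorem. Stage one is immediate: by Lemma \ref{9} together with Lemma \ref{3}(\ref{i11}), every point of $D^c$ admits a neighborhood on which only finitely many $\theta_P$ are nonzero, so $G$ is locally a finite sum of the $C^\infty$ functions $\theta_P J^m_{x_P} F$; hence $G \in C^\infty(D^c)$ and all its classical partial derivatives on $D^c$ are obtained by Leibniz.

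For stage two, fix $x_0 \in D$, $|i| \le m$, and $x \in D^c$ close to $x_0$. Using the identity $\partial^j J^m_{x_P} F = J^{m-|j|}_{x_P}(\partial^j F)$ and Leibniz,
\[
\partial^i G(x) = \sum_{j \le i}\binom{i}{j}\sum_P \partial^{i-j}\theta_P(x)\, J^{m-|j|}_{x_P}(\partial^j F)(x).
\]
Differentiating the partition-of-unity identity $\sum_P \theta_P \equiv 1$ on $D^c$ (Lemma \ref{3}(\ref{i4})) gives $\sum_P \partial^\alpha \theta_P(x) = \mathbf{1}_{\alpha = 0}$, which lets me subtract the $P$-independent quantity $\partial^j F(x)$ from each inner factor without changing the sum, rewriting the expression as
\[
\partial^i G(x) - \partial^i F(x) = \sum_{j \le i}\binom{i}{j}\sum_P \partial^{i-j}\theta_P(x)\bigl[J^{m-|j|}_{x_P}(\partial^j F)(x) - \partial^j F(x)\bigr].
\]
Proposition \ref{p1} applied to $\partial^j F \in C^{m-|j|}$ bounds the bracket by $C_n\,\omega(|x-x_P|)\,|x-x_P|^{m-|j|}$, where $\omega$ is a modulus of continuity of $\nabla^m F$ on a fixed compact neighborhood of $x_0$. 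The Whitney geometry---Lemma \ref{1}(\ref{i5}) together with $x \in \operatorname{supp}\theta_P \subset 1.1P$---forces $|x-x_P|\lesssim\delta_P\approx\operatorname{dist}(x,D)\le|x-x_0|$; combined with $|\partial^{i-j}\theta_P(x)|\lesssim\delta_P^{-|i-j|}$ (Remark \ref{11}) and the cardinality bound of Lemma \ref{9}, the $\delta_P^{-|i-j|}$ growth is exactly balanced by the $\delta_P^{m-|j|}$ Taylor-remainder factor, producing
\[
|\partial^i G(x) - \partial^i F(x)|\lesssim \omega(C|x-x_0|)\,|x-x_0|^{m-|i|}.
\]
Since $\partial^i F$ is continuous and $m-|i|\ge 0$, this yields $\partial^i G(x)\to\partial^i F(x_0)$ as $x\to x_0$ through $D^c$.

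For stage three, I combine the preceding estimate with Taylor's theorem for $\partial^i F\in C^{m-|i|}$ at $x_0$ to deduce, uniformly on compacts,
\[
\partial^i G(x) = \sum_{|\alpha|\le m-|i|}\frac{\partial^{i+\alpha}F(x_0)}{\alpha!}(x-x_0)^\alpha + o(|x-x_0|^{m-|i|})\quad\text{as}\ x\to x_0\in D,\ x\in D^c,
\]
and the analogous expansion automatically holds for $x\in D$ because $G=F\in C^m$ there. These are exactly the Whitney compatibility conditions of order $m$ for the jet $(J^m_{x_0}F)_{x_0\in D}$, so together with $G\in C^\infty(D^c)$ they place us in the setting of the classical Whitney extension theorem (cf.\ Stein \cite{Stein1970Singular}, Chapter VI, Theorem 4), yielding $G\in C^m(\mathbb{R}^n)$ with $\partial^iG(x)=\partial^iF(x)$ for $x\in D$ and $|i|\le m$.

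\textbf{Main obstacle.} The heart of the proof is stage two. Without the cancellation $\sum_P\partial^\alpha\theta_P\equiv 0$ for $|\alpha|\ge 1$, the $\delta_P^{-|i-j|}$ blow-up of $|\partial^{i-j}\theta_P|$ could not be absorbed. The trick of subtracting the $P$-independent function $\partial^j F(x)$ replaces $J^{m-|j|}_{x_P}(\partial^jF)(x)$ by a genuine Taylor remainder of size $\lesssim\delta_P^{m-|j|}$, whose $\delta_P$ power is precisely what is needed to cancel $\delta_P^{-|i-j|}$ and leave a decaying factor $\delta_P^{m-|i|}$---the standard mechanism that makes the Whitney extension attain $C^m$ regularity.
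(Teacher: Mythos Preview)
Your stages one and two match the paper's approach exactly: the paper works with $H=G-F$, uses the partition-of-unity cancellation $\sum_P\partial^{k-i}\theta_P\equiv 0$ for $i<k$, applies Proposition~\ref{p1}, and arrives at the same estimate (written there as $|\partial^iH(x)|\lesssim |x-x_0|^{m-|i|}\sum_{|h|=m}\sup_{|t-x|\le 5|x-x_0|}|\partial^hF(t)-\partial^hF(x_0)|$), which is your $\omega(C|x-x_0|)\,|x-x_0|^{m-|i|}$.

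Stage three, however, does not work as written. The Whitney extension theorem (Stein, Chapter~VI, Theorem~4) is an \emph{existence} statement: given a jet on $D$ satisfying the Whitney conditions, some $C^m$ extension exists. The jet $(J^m_{x_0}F)_{x_0\in D}$ trivially satisfies those conditions because $F\in C^m(\mathbb{R}^n)$, but invoking the theorem then only tells you that an extension exists---indeed $F$ itself is one---and says nothing about whether \emph{your} $G$ is $C^m$. Your remark that ``the analogous expansion automatically holds for $x\in D$ because $G=F\in C^m$ there'' is also circular: equality $G=F$ on $D$ is only at the level of function values, and $\partial^iG(x)$ for $x\in D$ has no meaning until you prove $G$ is $i$-times differentiable there, which is precisely the conclusion sought.

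What is actually needed---and what the paper does in its appendix---is a direct induction on $|i|$: assuming $\partial^jG$ exists on $\mathbb{R}^n$ and equals $\partial^jF$ on $D$ for all $|j|\le k$, your stage-two estimate with $|i|=k$ gives $|\partial^iG(x)-\partial^iG(x_0)|=|\partial^iH(x)|=o(|x-x_0|)$ as $x\to x_0\in D$ (since $m-k\ge 1$), which by definition shows $\partial^iG$ is differentiable at $x_0$ with vanishing derivative of $H$; this advances the induction. The case $|i|=m$ then gives continuity of $\partial^iG$. All the ingredients for this are already in your stage two; you should replace the Whitney-theorem citation with this short induction.
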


\begin{proof}
The theorem follows the proof of Lemma 2 in \cite{Whitney1934Analytic} with slightly different constants. The proof can also be found in Appendix \ref{A1}.
\end{proof}

\section{Main Lemmas}\label{main-lemmas}

The proof of our main result in Theorem \ref{main} uses three main lemmas, which we will prove here. For them we let $D$ be a closed nonempty proper subset of $\mathbb{R}^n$, and let $\mathscr{F}$ be the family of cubes as in Lemma \ref{1}. First, we have a definition.

\begin{Definition}\label{defn:paths}
Associated to any  cube $P\in\mathscr{F}$, let  $A_P$ denote the collection of paths defined as 
\begin{align*}
A_P=\{tx_1+(1-t)x_2:  ~ & Q_1,Q_2,Q_1',Q_2'\in\mathscr{F},\,t\in[0,1], \\
&\,x_1\in Q_1\leftrightarrow Q_2\leftrightarrow P\leftrightarrow Q_2'\leftrightarrow Q_1'\ni x_2\}.
\end{align*}
\end{Definition}

\begin{Lemma}[Exponentially decreasing path]\label{2}
Let $\mathscr{F}$ be a decomposition outlined in Lemma \ref{1}.  Let $P\in\mathscr{F}$. Then there exists $A>0$ and $a\in(0,1)$ which are only depending on $n$ such that for any $x\in A_P$ (see Definition \ref{defn:paths}), there exists a family of cubes $\{P_j = P_j(P,x)\}_{j=0}^\infty$ in $\mathscr{F}$ such that
\begin{enumerate}[(1)]
\item\label{i9} $[x,x_P)\subseteq\bigcup_{j\in\mathbb{N}} P_j$. ($x_p$ is defined in Definition \ref{24}.)
\item\label{i10} $x\in P_0$, $P_j\leftrightarrow P_{j+1}$ for all $j\geq 0$.
\item\label{i1} For any $i,j$ satisfying $i\le j$, $\delta_{P_j}\le Aa^{j-i}\delta_{P_i}$.
\end{enumerate}
\end{Lemma}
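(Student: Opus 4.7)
The plan is to construct $\{P_j\}_{j=0}^\infty$ by walking along the segment $[x,x_P]$ and recording, in order, the Whitney cubes that the segment enters, then to extract exponential decay of the diameters by combining the Whitney scale balance $10\delta_Q \le \text{dist}(Q,D) \le 22\delta_Q$ from Lemma \ref{1}(\ref{i5}) with a dyadic counting argument.

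Parameterize the segment by $\gamma(t)=(1-t)x+tx_P$ for $t\in[0,1]$. I would first verify that $[x,x_P)\subseteq D^c$: chaining the two adjacency steps in the definition of $A_P$ through Lemma \ref{6}(\ref{i7}) gives $\text{dist}(x,P)\le 6\delta_P$, whereas $\text{dist}(x_P,P)=\text{dist}(P,D)\ge 10\delta_P$; since $\text{dist}(\cdot,P)$ is convex, any $\gamma(s)$ with $s\in[0,1)$ would satisfy $\text{dist}(\gamma(s),P)\le (1-s)\,\text{dist}(x,P)+s\,\text{dist}(x_P,P)<\text{dist}(x_P,P)=\text{dist}(D,P)$, ruling out $\gamma(s)\in D$. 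Next, I would take $P_0\in\mathscr{F}$ to be any cube containing $\gamma(0)=x$ and inductively set $t_{j+1}=\sup\{t\ge t_j:\gamma(t)\in P_j\}$, choosing $P_{j+1}\in\mathscr{F}$ to be a cube containing the exit point $\gamma(t_{j+1})$. Property (\ref{i10}) is then immediate since both $P_j$ and $P_{j+1}$ contain $\gamma(t_{j+1})$. For (\ref{i9}) it suffices to show $t_j\to 1$; if instead $t_j$ converged to some $t^*<1$, then $\gamma(t^*)\in D^c$ would have positive distance to $D$, forcing $\delta_{P_j}$ to be bounded below, but only finitely many distinct Whitney cubes of bounded-below diameter can fit in a bounded region, contradicting the strict distinctness of the $P_j$'s (a line meets each convex cube in a single interval).

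The main obstacle is clause (\ref{i1}). Applying Lemma \ref{1}(\ref{i5}) to $y_j:=\gamma(t_j)\in P_j$ gives $\delta_{P_j}\le\text{dist}(y_j,D)/10\le|y_j-x_P|/10=:r_j/10$, and Lemma \ref{6}(\ref{i7}) gives $\delta_{P_{j+1}}\le 2\delta_{P_j}$, so diameters can at worst grow by a factor of $2$ per step while remaining capped above by $r_j/10$ with $r_j$ nonincreasing. The difficulty is that the in-cube travel $|y_{j+1}-y_j|$ can be arbitrarily small compared to $\delta_{P_j}$ when $\gamma$ grazes a corner of $P_j$, so no uniform per-step multiplicative decrease of $r_j$ is available. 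The key observation I would use is a dyadic counting bound: for each dyadic side length $2^{-m}$, only $O_n(1)$ cubes in the sequence can have that side, because such cubes all lie in the annulus $\{z:10\cdot 2^{-m}\le\text{dist}(z,D)\le 23\cdot 2^{-m}\}$, inside which $\gamma$ has length of order $2^{-m}$ by the $1$-Lipschitz property of $\text{dist}(\cdot,D)$ along $\gamma$, while the dyadic alignment of $\mathscr{F}$ in Lemma \ref{1}(4) caps how many cubes of a fixed side length can meet a line segment of that length. Combined with the upper cap $\delta_{P_j}\le r_j/10$ and the factor-$2$ per-step fluctuation, after $C_n k$ indices the realized dyadic scale must have dropped by at least $k$ levels, yielding $\delta_{P_j}\le Aa^{j-i}\delta_{P_i}$ with $a=2^{-1/C_n}\in(0,1)$ and a constant $A$ absorbing the local fluctuations.
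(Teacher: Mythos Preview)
Your overall strategy matches the paper's: walk along $[x,x_P)$, enumerate the Whitney cubes encountered, and derive exponential decay from a dyadic counting bound. Your greedy construction of the sequence $\{P_j\}$ via successive exit parameters is a cleaner alternative to the paper's Zorn's-lemma extraction of a minimal cover, and with the obvious care of choosing $P_{j+1}$ to contain some $\gamma(t_{j+1}+\epsilon)$ (so that $t_{j+2}>t_{j+1}$ and the cubes stay distinct) it works fine.

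However, the heart of your argument for clause~(\ref{i1}) has a genuine gap. You claim that the portion of $\gamma$ lying in the annulus $\{z:10\cdot 2^{-m}\le\text{dist}(z,D)\le 23\cdot 2^{-m}\}$ has length of order $2^{-m}$, citing the $1$-Lipschitz property of $\text{dist}(\cdot,D)$. The Lipschitz property gives only a \emph{lower} bound on that length (crossing the band once costs at least $13\cdot 2^{-m}$ of arc length), never an upper bound: for a general closed set $D$ the function $t\mapsto\text{dist}(\gamma(t),D)$ need not be monotone, and $\gamma$ could re-enter the band many times if $D$ has pieces approaching the segment away from $x_P$. Without an upper bound on that length, your per-scale counting collapses.

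What actually controls this is a two-sided comparison $\text{dist}(\gamma(t),D)\asymp|\gamma(t)-x_P|$ along the segment, and this is exactly where the hypothesis $x\in A_P$ is used nontrivially. Chaining as in your first paragraph gives $\text{dist}(\gamma(s),P)\le(1-s)\cdot 6\delta_P+s\cdot\text{dist}(P,D)$, and then the triangle inequality $\text{dist}(\gamma(s),D)\ge\text{dist}(P,D)-\text{dist}(\gamma(s),P)$ yields $\text{dist}(\gamma(s),D)\ge 4(1-s)\delta_P\gtrsim|\gamma(s)-x_P|$. This is precisely the content of the paper's Claim~2 (the upper bound $|x'-x_P|\le 131\,\delta_Q$ for $x'\in Q\cap[x,x_P)$). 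Once you have it, $\gamma$ meets the annulus at scale $2^{-m}$ only on a single sub-segment of length $O(2^{-m})$, your dyadic counting goes through, and your conclusion with $a=2^{-1/C_n}$ follows. But as written, invoking only $1$-Lipschitzness, the key step is unjustified.
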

  
\begin{proof}
Let $c_P\in P$ such that 
\begin{equation}\label{e18}
|x_P-c_P|=\text{dist}(x_P,P)=\text{dist}(P,D).
\end{equation}
(Such $c_P$ exists because $P$ is closed.) 

{\it Claim 1:}\label{16}
For any $x\in A_P$, the interval $[x,x_p)\subseteq D^c$.

To prove the claim  we proceed as follows:

Given any $x \in A_P$, there exist points $x_1, x_2 \in A_P$ and cubes $Q_1, Q_2, Q_1', Q_2' \in \mathscr{F}$, and a parameter $t \in [0,1]$ such that
\begin{equation*}
x = tx_1 + (1-t)x_2
\end{equation*}
where the sequence of sets satisfies:
\begin{equation*}
x_1\in Q_2 \leftrightarrow Q_1 \leftrightarrow P \leftrightarrow Q_2' \leftrightarrow Q_1'\ni x_2.
\end{equation*}
Let $a_1\in Q_2\bigcap Q_1$ and $a_2\in Q_1\bigcap P$, by applying triangle inequality and Lemma \ref{6} (\ref{i7}), we have
\begin{equation*}
|x_1-c_P|\le|x_1-a_1|+|a_1-a_2|+|a_2-c_P|\le \delta_{Q_2} + \delta_{Q_1} + \delta_P\le 4\delta_P + 2\delta_P + \delta_P = 7\delta_P.
\end{equation*}
Similarly,
\begin{equation*}
|x_2 - c_P| \le 7\delta_P.
\end{equation*}

To find the distance from $x$ to $c_P$, we use:
\begin{equation*}
|x-c_P|= |tx_1 + (1-t)x_2 - c_P|.
\end{equation*}
Applying the triangle inequality, we get:
\begin{equation*}
|tx_1 + (1-t)x_2 - c_P|=|t(x_1-c_P) + (1-t)(x_2- c_P)| \le t|x_1 - c_P| + (1-t)|x_2 - c_P|.
\end{equation*}
Substituting the bounds we obtained:
\begin{equation*}
|x-c_P| \le t \cdot 7\delta_P + (1-t) \cdot 7\delta_P = 7\delta_P.
\end{equation*}
We know that:
\begin{equation*}
7\delta_P \le \frac{7}{10}\text{dist}(P, D),
\end{equation*}
because $P$ is chosen such that $\delta_P \le \frac{1}{10} \text{dist}(P, D)$. 
Hence,
\begin{equation}\label{e19}
|x-c_P| \le \frac{7}{10}\text{dist}(P, D).
\end{equation}

To prove the claim, we continue as follows: 
For any $x' \in [x, x_P)$, there exists $t \in [0,1)$ such that
\begin{equation*}
x' = tx_P + (1-t)x.
\end{equation*}
We need to show that $x' \in D^c$, where $D^c$ denotes the complement of $D$. To do this, we first calculate the distance from $x'$ to $c_P$ and use it to show that $x'$ cannot be in $D$.

Consider the distance from $x'$ to $c_P$:
\begin{equation*}
|x' - c_P| = |tx_P + (1-t)x - c_P|.
\end{equation*}
Using the triangle inequality, we get:
\begin{equation*}
|tx_P + (1-t)x - c_P| \le t|x_P - c_P| + (1-t)|x - c_P|.
\end{equation*}
Given that $|x_P-c_P| = \text{dist}(P, D)$ and applying the given bounds and inequality (\ref{e19}), we have:
\begin{equation*}
|x'-c_P| \le t|x_P - c_P| + (1-t)|x - c_P| \le \left(t + \frac{7}{10}(1-t)\right)\text{dist}(P, D).
\end{equation*}
We know that $t + \frac{7}{10}(1-t) < 1$ because $\frac{7}{10} < 1$ and $1-t>0$. Therefore,
\begin{equation*}
|x'-c_P| < \text{dist}(P, D).
\end{equation*}
Since $|x'-c_P|$ is strictly less than $\text{dist}(P, D)$, it follows that:
\begin{equation*}
\text{dist}(x', D) \ge \text{dist}(c_P, D) - |c_P - x'| > \text{dist}(P, D) - \text{dist}(P, D) = 0.
\end{equation*}
Thus, $\text{dist}(x', D) > 0$, implying that $x' \not\in D$ and therefore $x' \in D^c$.
This completes the proof of  Claim 1. 
 
\vskip.1in 

{\it Claim 2.}\label{18}
We claim  that for any $Q \in \mathscr{F}$, if $x' \in Q \cap [x, x_P)$, then 
\begin{equation}\label{e2}
10\delta_Q \le |x' - x_P| \le 131\delta_Q.
\end{equation}

For the left inequality, we have
\begin{equation*}
10\delta_Q \le \text{dist}(Q, D) \le |x'- x_P|.
\end{equation*}

For the right inequality, let $t \in (0, 1]$ such that
\begin{equation}\label{e20}
x' - x_P = t(x - x_P).
\end{equation}
Then, by equation (\ref{e18}) and inequality (\ref{e19}), we can write:  
\begin{equation*}
|c_P - x'| = |(1 - t)(c_P - x_P) + t(c_P - x)| \le \left(1 - \frac{3t}{10}\right)\text{dist}(P, D).
\end{equation*}
There exists $a \in Q$ and $b \in D$ such that $|a- b| = \text{dist}(Q, D)$. We know that 
\begin{equation*}
\text{dist}(P,D)\le|c_P-b|
\end{equation*}
because $c_P\in P$ and $b\in D$.
By Lemma \ref{1}(\ref{i5}),  triangle inequality and the above two inequalities, we  obtain 
\begin{equation*}
23\delta_Q \ge \delta_Q + \text{dist}(Q, D) = \delta_Q + |a - b| \ge |x' - b| \ge |c_P - b| - |c_P - x'| \ge \frac{3}{10}t \, \text{dist}(P, D).
\end{equation*}
By triangle inequality, inequality (\ref{e19}) and equation (\ref{e18}),
\begin{equation}\label{e24}
|x - x_P| \le |x - c_P| + |c_P - x_P| \le \frac{17}{10}\text{dist}(P, D).
\end{equation}
Therefore, by equation (\ref{e20}), and the above two inequalities, we get
\begin{equation*}
|x'-x_P| = t|x-x_P| \le \frac{17}{10}t \, \text{dist}(P, D) \le\frac{17}{3}\cdot23\delta_Q\le 131\delta_Q.
\end{equation*}
This completes the proof of Claim 2.
\vskip.1in

{\it Claim 3:} \label{17}
There is a subfamily $\mathscr{U}_0$ of $\mathscr{F}$ such that 
\begin{equation*}
[x,x_P)\subset \bigcup_{R\in\mathscr{U}_0}R,
\end{equation*}
and for any $Q \in \mathscr{U}_0$,
\begin{equation*}
[x, x_P) \not\subseteq \bigcup_{R\in\mathscr{U}_0,R\ne Q}R,
\end{equation*}
i.e. the segment $[x, x_P)$ is contained in the union of the elements of $\mathscr{U}_0$ but is not completely contained in the union of the elements of $\mathscr{U}_0$ excluding $Q$ for any $Q\in\mathscr{U}_0$.

To prove this, let us define the set $\mathbb{A} = \{\mathscr{U} \subseteq \mathscr{F}: [x, x_P) \subseteq \bigcup_{R\in\mathscr{U}}R \}$.

For any $\mathscr{U}, \mathscr{V} \in \mathbb{A}$, we define $\mathscr{U} \le \mathscr{V}$ if $\mathscr{U} \supseteq \mathscr{V}$. With this definition, $\mathbb{A}$ is a partially ordered set. Our goal is to use Zorn's lemma to show that the set $\mathbb{A}$ has a maximum element $\mathscr{U}_0$, for which the hypothesis of our claim holds.

Note that $\mathbb{A}$ is nonempty because $[x, x_P) \subseteq D^c$ and $\mathscr{F} \in \mathbb{A}$. Let $\mathbb{B}$ be any nonempty chain in $\mathbb{A}$, and let $\mathscr{W} = \bigcap_{\mathscr{U}\in\mathbb{B}}\mathscr{U}$. Then $\mathscr{W} \in \mathbb{A}$. Indeed, by definition, $\mathscr{W} \subseteq \mathscr{F}$. To show that $\mathscr{W}$ belongs to $\mathbb{A}$, we only need to show that $[x, x_P) \subseteq \bigcup_{R\in\mathscr{W}}R$.

For any $x' \in [x, x_P)$, we claim  that there exists $R \in \mathscr{F}$ such that $x' \in R$ and for all $\mathscr{U} \in \mathbb{B}$, $R \in \mathscr{U}$. Thus, $x' \in R \in \mathscr{W}$. We prove the claim by contradiction.

If the above statement is false, then for any $Q$ satisfying $x'\in Q$, there exists a $\mathscr{U}_Q\in\mathbb{B}$ such that $Q\notin\mathscr{U}_Q$. Because the set $\{ Q \in \mathscr{F}: x'\in Q\}$ is finite (see Lemma \ref{9}) and $\mathbb{B}$ is a chain, there exists $Q'\in\{ Q \in \mathscr{F}: x'\in Q\}$ such that $\mathscr{U}_Q\le\mathscr{U}_{Q'}$ for any $Q\in\{ Q \in \mathscr{F}: x'\in Q\}$. Then for any $Q\in\{ Q \in \mathscr{F}: x'\in Q\}$, $\mathscr{U}_{Q'}\subset\mathscr{U}_Q$. By this and the fact $Q\notin\mathscr{U}_Q$, we have $Q\notin\mathscr{U}_{Q'}$. We get that $x'\notin\bigcup_{R\in\mathscr{U}_{Q'}}R$. So $[x,x_P)\not\subseteq\bigcup_{R\in\mathscr{U}_{Q'}}R$, therefore $\mathscr{U}_{Q'}\notin\mathbb{A}$. This contradicts the fact $\mathscr{U}_{Q'}\in\mathbb{B}\subseteq\mathbb{A}$, and completes the proof of the claim.

Hence,  $[x, x_P) \subseteq \bigcup_{R\in\mathscr{W}}R$. Therefore, $\mathscr{W}\in \mathbb{A}$, as desired.

Notice that $\mathscr{W}$ is an upper bound of $\mathbb{B}$ by its definition. Therefore, by Zorn's lemma, $\mathbb{A}$ has a maximum element $\mathscr{U}_0$, which means that $[x,x_P)\subset \bigcup_{R\in\mathscr{U}_0}R$, and for any $Q \in \mathscr{U}_0$, $[x, x_P) \not\subseteq \bigcup_{R\in\mathscr{U}_0,R\ne Q}R$. This completes the proof of Claim 3.

\vskip.1in 
 
Let $Q \in \mathscr{U}_0$.
We define $a_Q$ and $b_Q$ as the minimum and maximum values, respectively, of
$t \in [0,1)$ such that $x + t(x_P - x) \in Q$, where $x\in A_p$ and  it is  fixed at the beginning of Claim \hyperref[16]{1}. (Notice that the minimum and maximum values exist because $Q$ is closed.) By the convexity of the cube $Q$, we can write 
\begin{equation*} 
[a_Q, b_Q] = \{ t \in [0, 1) : x + t(x_P - x) \in Q \}.
\end{equation*}
For every $t \in (0, 1)$, as stated in  inequality (\ref{e2}), for any $Q\in\mathscr{U}_0$ satisfying $a_Q<t$, we have 
\begin{equation*}
\frac{1}{131}|(1-t)(x-x_P)|\le\frac{1}{131}|(1-a_Q)(x-x_P)|=\frac{1}{131}|(x+a_Q(x_P-x))-x_P|\le\delta_Q.
\end{equation*}
There are only finitely many $Q\in\mathscr{U}_0$ such that $a_Q < t$ because of the above lower bound of $\delta_Q$. Note that for any $Q,Q'\in\mathscr{U}_0$, if $Q\ne Q'$ ,then $a_Q\ne a_{Q'}$. (Otherwise either $[a_Q,b_Q]\subseteq[a_{Q'},b_{Q'}]$ or $[a_{Q'},b_{Q'}]\subseteq[a_Q,b_Q]$, this contradicts with that $\mathscr{U}_0$ is a maximum element which is shown in Claim \hyperref[17]{3}.)
Therefore, there is a bijection from $\mathbb{N}$ to $\mathscr{U}_0$, denoted by $j \mapsto P_j$, such that for all $j < k$, $a_{P_j} < a_{P_k}$. This way we have constructed a family  $\{P_j\}_{j=0}^{\infty}$  of cubes in $\mathscr{F}$, as mentioned in the statement of the lemma. 
\vskip.1in

{\it Proof of property (\ref{i9}):} Because $\{P_j\}_{j=0}^\infty$ is an enumeration of $\mathscr{U}_0$, and because $\mathscr{U}_0\in\mathbb{A}$, by the definition of $\mathbb{A}$, we have
\begin{equation*}
[x,x_P)\subseteq\bigcup_{R\in\mathscr{U}_0}R=\bigcup_{j=0}^{\infty} P_j. 
\end{equation*}
\vskip.1in

{\it Proof of property (\ref{i10}):} We will show for all $j$,  $P_j \leftrightarrow P_{j+1}$. 
Otherwise, $b_{P_j} < a_{P_{j+1}}$,  and there exists an $i$ such that $P_i \cap \{ x + t(x_P - x) : t \in (b_{P_j}, a_{P_{j+1}}) \} \neq \emptyset$. Therefore, $[a_{P_i}, b_{P_i}] \cap (b_{P_j}, a_{P_{j+1}}) \neq \emptyset$. This implies that $a_{P_i} < a_{P_{j+1}}$, which means $a_{P_i} \leq a_{P_j} \leq b_{P_j} \leq b_{P_i}$. Hence, $[x, x_P) \subseteq \bigcup_{R\in\mathscr{U}_0,R\ne P_j}R$, which contradicts the fact that $\mathscr{U}_0$ is a maximum element.
\vskip.1in

{\it Proof of property (\ref{i1}):}
We know that, by Lemma \ref{6}(\ref{i7}) and property (\ref{i10}), there exists a strictly increasing function $g : \mathbb{N} \to \mathbb{N}$ such that $g(0) = 0$ and
\begin{equation}\label{e22}
\delta_{P_{g(m)}} = \frac{1}{2^m} \delta_{P_0}.
\end{equation}
For any $m$, for any $i$ satisfying $g(m) \le i \le g(m+1)$, let $x_m = x + a_{g(m)} (x_P - x)$ and $x_{m+1} = x + b_{g(m+1)} (x_P - x)$. We have $P_i\bigcap[x_m,x_{m+1}]\ne\emptyset$ because of the choice of $i$. Therefore there exists $x_i \in P_i \cap [x_m, x_{m+1}]$. Therefore we have
\begin{equation*}
|x_{m+1}-x_P|\le|x_i-x_P|
\end{equation*}
because $x_{m+1}$ is in the middle of the segment connecting $x_i$ and $x_P$. Similarly, we have
\begin{equation*}
|x_i-x_P|\le|x_m-x_P|.
\end{equation*}
Therefore, by equation (\ref{e22}), Lemma \ref{1}(\ref{i5}), Claim \hyperref[18]{2} and the above two inequalities, we get lower bound and upper bound for $\delta_{P_i}$:
\begin{equation}\label{e21}
\begin{split}
\frac{5}{131}\delta_{P_{g(m)}}=\frac{10}{131}\delta_{P_{g(m+1)}}\le&\frac{1}{131}\text{dist}(P_{g(m+1)},D) \le \frac{1}{131}|x_{m+1}-x_P| \le \frac{1}{131}|x_i-x_P|\\ 
\le&\delta_{P_i}\le \frac{1}{10}|x_i-x_P| \le \frac{1}{10}|x_m-x_P| \le \frac{131}{10}\delta_{P_{g(m)}}.
\end{split}
\end{equation}
So for any $x'\in P_i$,
\begin{equation*}
|x'-x_i|\le\delta_{P_i}\le\frac{131}{10}\delta_{P_{g(m)}}.
\end{equation*}
By Claim \hyperref[18]{2}, we have
\begin{equation*}
|x_m - x_i| \le |x_m - x_P| \le 131\delta_{P_{g(m)}}.
\end{equation*}
So by triangle inequality and combining the above two inequalities, we get
\begin{equation*}
|x'-x_m|\le|x'-x_i|+|x_i-x_m|\le(\frac{131}{10}+131)\delta_{P_{g(m)}}<145\delta_{P_{g(m)}}.
\end{equation*}
Therefore
\begin{equation*}
P_i \subseteq B(x_m, 145\delta_{P_{g(m)}}).
\end{equation*}
Hence, by this, the lower bound of $\delta_{P_i}$ in inequality (\ref{e21}) and the fact that the interiors of $P_i$ are disjoint for different $i$, there are at most a constant $C_n$ number of $i$ such that $g(m) \le i \le g(m+1)$, implying 
\begin{equation}\label{e23}
g(m+1) - g(m) \le C_n. 
\end{equation}

We let  $A = 2^{2C_n + 1} > 0$ and $a = (\frac{1}{2})^{\frac{1}{C_n}} \in (0, 1)$. We prove that the inequality in property (\ref{i1}) holds for these constants. For any $i, j \in \mathbb{N}$ satisfying $i \le j$, there exist $m, m' \in \mathbb{N}$ such that $g(m) \le i < g(m+1)$ and $g(m') \le j < g(m'+1)$. Then, by Lemma \ref{6}(\ref{i7}), equation (\ref{e22}), and inequality (\ref{e23}),
 \begin{align}\notag
\delta_{P_j} \le \delta_{P_{g(m')}} 2^{C_n} &\le \delta_{P_{g(m)}} \left(\frac{1}{2}\right)^{m' - m} \cdot 2^{C_n} \le 2^{2C_n} \delta_{P_i} a^{C_n (m' - m)}\\\notag
 &
\le 2^{2C_n + 1} \delta_{P_i} a^{g(m'+1) - g(m)} \le A a^{j - i} \delta_{P_i}.
\end{align}
This completes the proof of the property (\ref{i1}).  
\end{proof}

\begin{Remark}\label{19}
Let $\mathscr{F}$ be a decomposition outlined in Lemma \ref{1}.  Let $P\in\mathscr{F}$. The $A_P$ and $x_P$ are defined in Definition \ref{defn:paths} and Definition \ref{24}. For any $x\in A_P$, by the inequality (\ref{e24}) in the proof of Lemma \ref{2} and the Lemma \ref{1}(\ref{i5}), we have 
\begin{equation*}
|x - x_P|\lesssim\text{dist}(P, D)\lesssim\delta_P,
\end{equation*}
where the constant is an absolute constant.
\end{Remark}

\begin{Lemma}\label{12} Let $\mathscr{F}$ be a decomposition described in Lemma \ref{1}. For $P \in \mathscr{F}$, let $A_P$ and $x_P$ be defined as in Lemma \ref{2}, and let $x \in A_P$. Suppose $\{P_j\}_{j=0}^{\infty}$ is the family of cubes in $\mathscr{F}$ that satisfies the  properties (\ref{i9})-(\ref{i1}) in Lemma \ref{2}. Let $\{s\}>\frac{n}{p}$ and $\epsilon=\frac{\{s\}p - n}{2}$. Then, for any $F \in L^{s,p}(\mathbb{R}^n)$, the following inequality holds:
\begin{equation*}
\sum_{|i| \leq \lfloor s \rfloor} \delta_P^{n - sp + |i|p} \left| \partial^i J^{\lfloor s \rfloor}_{x_P} F(x) - \partial^i F(x) \right|^p \lesssim \delta_P^{n - \{s\}p + \epsilon} \sum_{j=1}^{\infty} \|F\|_{L^{s,p}(P_j)}^p \delta_{P_j}^{\{s\}p - n - \epsilon}.
\end{equation*}
\end{Lemma}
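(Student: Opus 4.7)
The strategy is to reduce the left side to differences of top-order derivatives of $F$, transport those differences from $y \in [x, x_P)$ to $x_P$ via a telescoping sum through the path cubes $\{P_j\}$ (using Proposition \ref{5} in each cube), and conclude by H\"older's inequality in the summation index with a weight tuned to $\epsilon$. The main technical point is passing to the limit at $x_P \in D$ in the telescoping; this is handled by the continuous representative guaranteed by the Sobolev embedding, which is available precisely because $\{s\} > n/p$.

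Step 1 (reduction). Differentiating the Taylor polynomial term-by-term gives $\partial^{i} J^{\lfloor s\rfloor}_{x_P} F = J^{\lfloor s\rfloor-|i|}_{x_P}(\partial^{i} F)$. Applying Proposition \ref{p1} to $\partial^{i} F$ at base point $x_P$ produces a $t\in(0,1)$ (depending on $i,F,x$) with $y := tx + (1-t)x_P \in [x, x_P)$ such that
\begin{equation*}
\partial^{i} F(x) - \partial^{i} J^{\lfloor s\rfloor}_{x_P}F(x) = \sum_{|k|=\lfloor s\rfloor-|i|}\frac{1}{k!}\bigl(\partial^{k+i}F(y)-\partial^{k+i}F(x_P)\bigr)(x-x_P)^{k}.
\end{equation*}
Combined with $|x - x_P| \lesssim \delta_P$ from Remark \ref{19}, it suffices to bound $|\partial^{k+i}F(y) - \partial^{k+i}F(x_P)|$ for $|k+i| = \lfloor s\rfloor$.

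Step 2 (telescoping along the path). Choose intermediate points $c_j \in P_j \cap P_{j+1}$ for $j \ge 0$ (nonempty by Lemma \ref{2}(\ref{i10})) and chain $y$ to $x_P$ through them. Since $\delta_{P_j} \to 0$ geometrically by Lemma \ref{2}(\ref{i1}) and $[x,x_P)\subseteq\bigcup_j P_j$ terminates at $x_P\in D$, we have $c_j \to x_P$; continuity of the $\lfloor s\rfloor$-th derivative of the Sobolev representative then gives $\partial^{k+i}F(c_j) \to \partial^{k+i}F(x_P)$, so the telescoping converges absolutely. Since both endpoints of each telescoped segment lie in a common $P_j$, Proposition \ref{5} provides $|\partial^{k+i}F(u) - \partial^{k+i}F(v)| \lesssim \|F\|_{L^{s,p}(P_j)}\,|u-v|^{\{s\}-n/p} \lesssim \|F\|_{L^{s,p}(P_j)}\,\delta_{P_j}^{\{s\}-n/p}$, yielding
\begin{equation*}
|\partial^{k+i}F(y) - \partial^{k+i}F(x_P)| \lesssim \sum_{j\ge 1}\|F\|_{L^{s,p}(P_j)}\,\delta_{P_j}^{\{s\}-n/p}.
\end{equation*}

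Step 3 (H\"older and exponent bookkeeping). Apply H\"older in $j$ with exponents $p, p' = p/(p-1)$ against the weight $\delta_{P_j}^{\epsilon/p}$:
\begin{equation*}
\Bigl(\sum_{j\ge 1}\|F\|_{L^{s,p}(P_j)}\delta_{P_j}^{\{s\}-n/p}\Bigr)^{p} \le \Bigl(\sum_{j\ge 1}\|F\|_{L^{s,p}(P_j)}^{p}\delta_{P_j}^{\{s\}p-n-\epsilon}\Bigr)\Bigl(\sum_{j\ge 1}\delta_{P_j}^{\epsilon/(p-1)}\Bigr)^{p-1}.
\end{equation*}
By Lemma \ref{2}(\ref{i1}) (with $\epsilon>0$) the last sum is a convergent geometric series bounded by a constant multiple of $\delta_{P_0}^{\epsilon/(p-1)}$, and the Whitney geometry forces $\delta_{P_0} \lesssim \delta_P$ (since $x \in P_0$ gives $10\delta_{P_0}\le\text{dist}(P_0,D)\le|x-x_P|\lesssim\delta_P$ via Lemma \ref{1}(\ref{i5}) and Remark \ref{19}); hence the second factor is $\lesssim \delta_P^{\epsilon}$. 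Multiplying by $|x - x_P|^{|k|p} \lesssim \delta_P^{(\lfloor s\rfloor - |i|)p}$ from Step 1 and by $\delta_P^{n-sp+|i|p}$, the exponent arithmetic
\begin{equation*}
(n - sp + |i|p) + (\lfloor s\rfloor - |i|)p + \epsilon = n - \{s\}p + \epsilon
\end{equation*}
produces exactly the right-hand side of the lemma; summing over the finitely many multi-indices $i$ with $|i|\le \lfloor s\rfloor$ completes the proof.
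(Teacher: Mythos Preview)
Your proposal is correct and follows essentially the same route as the paper: the Taylor remainder reduction via Proposition~\ref{p1} and Remark~\ref{19}, the telescoping of top-order derivatives along the path cubes $\{P_j\}$ with the Sobolev embedding (Proposition~\ref{5}) applied in each $P_j$, and then H\"older in $j$ against the weight $\delta_{P_j}^{\epsilon/(p-1)}$ summed geometrically via Lemma~\ref{2}(\ref{i1}). The only cosmetic differences are that the paper names the landing index $m$ with $y\in P_m$ explicitly before enlarging the sum to $j\ge 1$, and compresses your Steps~2--3 into a single chain of inequalities.
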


\begin{proof}
We only need to prove for each multi-index $i$, the corresponding item on the left hand side can be bounded by the right hand side:

For each $|i| \leq \lfloor s \rfloor$, we have $\partial^i F \in L^{s-|i|,p}(\mathbb{R}^n)$. By Proposition \ref{p1}, there exists $t \in (0, 1)$ such that
\begin{equation}\label{e3}
J_{x_P}^{\lfloor s \rfloor - |i|} \partial^i F(x) - \partial^i F(x) = \sum_{|k| = \lfloor s \rfloor - |i|} \frac{1}{k!} \left( \partial^{k+i} F(tx + (1-t)x_P) - \partial^{k+i} F(x_P) \right) (x - x_P)^k.
\end{equation}

There exists $m \in \mathbb{N}$ such that $tx + (1-t)x_P \in P_m$. Let $x_m := tx + (1-t)x_P$, and for any $j > m$, let $x_j \in P_{j-1} \cap P_j$. 

Using equation (\ref{e3}), Remark \ref{19}, H\"older's inequality, Lemma \ref{2}(\ref{i1}), and the Sobolev embedding theorem, we have:
\begin{align*}
\delta_P^{n-sp+|i|p} \left| \partial^i J^{\lfloor s \rfloor}_{x_P} F(x) - \partial^i F(x) \right|^p 
=\;& \delta_P^{n-sp+|i|p} \left| J^{\lfloor s \rfloor - |i|}_{x_P} \partial^i F(x) - \partial^i F(x) \right|^p \\
\lesssim \;& \delta_P^{n - \{s\}p} \sum_{|k| = \lfloor s \rfloor} \left| \partial^k F(x_m) - \partial^k F(x_P) \right|^p \\
\lesssim \; & \delta_P^{n - \{s\}p} \sum_{|k| = \lfloor s \rfloor} \bigg| \sum_{j=m}^{\infty} (\partial^k F(x_j) - \partial^k F(x_{j+1})) \bigg|^p \\
\lesssim & \sum_{|k| = \lfloor s \rfloor} \delta_P^{n - \{s\}p} \sum_{j=m}^{\infty} \left| \partial^k F(x_j) - \partial^k F(x_{j+1}) \right|^p \delta_{P_j}^{-\epsilon} \left( \sum_{j=m}^{\infty} \delta_{P_j}^{\frac{\epsilon}{p-1}} \right)^{p-1} \\
\lesssim \;& \delta_P^{n - \{s\}p + \epsilon} \sum_{j=1}^{\infty} \|F\|_{L^{s,p}(P_j)}^p \delta_{P_j}^{\{s\}p - n - \epsilon}
\end{align*}
 as claimed. This completes the proof of the lemma. 
\end{proof}

\begin{Lemma}\label{7}
Let $\mathscr{F}$ be a decomposition described in Lemma \ref{1}.  Then
\begin{enumerate}[(1)]
\item\label{i3} For any $Q,Q'\in\mathscr{F}$, if $Q\leftrightarrow Q'$, then
\begin{equation*}
\int_{Q'}\int_{Q}\frac{1}{|x-y|^{n+\{s\}p-p}}\mathrm{d}x\mathrm{d}y\lesssim\delta_Q^{n+p-\{s\}p}.
\end{equation*}
\item\label{i2} For any $ Q \in \mathscr{F}$,
\begin{equation*}
\sum_{Q':Q\not\leftrightarrow Q'}\int_{Q'}\int_{Q}\frac{1}{|x-y|^{n+\{s\}p}}\mathrm{d}x\mathrm{d}y\lesssim\delta_Q^{n-\{s\}p},
\end{equation*}
and
\begin{equation*}
\int_{D}\int_{Q}\frac{1}{|x-y|^{n+\{s\}p}}\mathrm{d}x\mathrm{d}y\lesssim\delta_Q^{n-\{s\}p}.
\end{equation*}
\end{enumerate}

\end{Lemma}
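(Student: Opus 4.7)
The plan is to handle the two parts separately, each time reducing to a one‑variable radial integral.

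For part (\ref{i3}), the key observation is that when $Q\leftrightarrow Q'$, Lemma \ref{6}(\ref{i7}) gives $\delta_Q$ and $\delta_{Q'}$ comparable, and since $Q\cap Q'\neq\emptyset$ we have $|x-y|\le\delta_Q+\delta_{Q'}\lesssim\delta_Q$ for all $x\in Q$, $y\in Q'$. So $Q'\subseteq B(x,C\delta_Q)$ for every $x\in Q$. Then I fix $x\in Q$ and estimate
\begin{equation*}
\int_{Q'}\frac{dy}{|x-y|^{n+\{s\}p-p}}\le\int_{B(x,C\delta_Q)}\frac{dy}{|x-y|^{n-p(1-\{s\})}}\lesssim\int_0^{C\delta_Q}r^{p(1-\{s\})-1}\,dr\lesssim\delta_Q^{p(1-\{s\})},
\end{equation*}
which is valid because the exponent $n+\{s\}p-p=n-p(1-\{s\})<n$ (as $\{s\}<1$). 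Integrating the resulting $\delta_Q^{p-\{s\}p}$ over $x\in Q$ produces the extra factor $\delta_Q^n$, giving the claimed $\delta_Q^{n+p-\{s\}p}$.

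For part (\ref{i2}), I first prove a lower bound on $|x-y|$ when $x\in Q$ and $y\in Q'$ with $Q\not\leftrightarrow Q'$. By Lemma \ref{6}(\ref{i8}), $Q'\cap(2Q)^\circ=\emptyset$, so $y$ lies outside the interior of $2Q$; since $x\in Q$, a direct computation in coordinates shows $\mathrm{dist}(x,\partial(2Q))\gtrsim\delta_Q$, hence $|x-y|\gtrsim\delta_Q$. Because the cubes in $\mathscr F$ have pairwise disjoint interiors, the union $\bigcup_{Q':Q\not\leftrightarrow Q'}Q'$ is contained in $\{y:|x-y|\gtrsim\delta_Q\}$ with the cubes still having disjoint interiors, so for fixed $x\in Q$,
\begin{equation*}
\sum_{Q':Q\not\leftrightarrow Q'}\int_{Q'}\frac{dy}{|x-y|^{n+\{s\}p}}\le\int_{\{|x-y|\ge c\delta_Q\}}\frac{dy}{|x-y|^{n+\{s\}p}}\lesssim\int_{c\delta_Q}^{\infty}r^{-\{s\}p-1}\,dr\lesssim\delta_Q^{-\{s\}p},
\end{equation*}
where convergence at infinity uses $\{s\}p>n\ge 0$. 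Integrating over $x\in Q$ gives the factor $\delta_Q^n$ and the first asserted bound $\delta_Q^{n-\{s\}p}$.

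The second bound in part (\ref{i2}) is entirely analogous: for $x\in Q$ and $y\in D$ we have $|x-y|\ge\mathrm{dist}(Q,D)\ge 10\delta_Q$ by Lemma \ref{1}(\ref{i5}), so the inner integral is again controlled by the tail bound $\int_{\{|x-y|\ge 10\delta_Q\}}|x-y|^{-n-\{s\}p}\,dy\lesssim\delta_Q^{-\{s\}p}$, and integrating over $Q$ finishes the proof. I do not expect any serious obstacle here; the only mild technical point is verifying the coordinate estimate $\mathrm{dist}(x,\partial(2Q))\gtrsim\delta_Q$ for $x\in Q$, which is immediate from the product structure of the cubes in Definition \ref{def}.
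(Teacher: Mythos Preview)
Your proposal is correct and essentially identical to the paper's proof: both parts reduce to the same radial integrals via the same geometric inputs (Lemma~\ref{6}(\ref{i7}) and the triangle inequality for part~(\ref{i3}); Lemma~\ref{6}(\ref{i8}) together with the disjoint-interiors property and Lemma~\ref{1}(\ref{i5}) for part~(\ref{i2})), and the paper's constant $\frac{1}{2\sqrt{n}}\delta_Q$ is exactly your coordinate estimate $\mathrm{dist}(x,\partial(2Q))\gtrsim\delta_Q$. One minor remark: the tail integral $\int_{c\delta_Q}^\infty r^{-\{s\}p-1}\,dr$ only needs $\{s\}p>0$ to converge, not $\{s\}p>n$, though the latter of course suffices.
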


\begin{proof}
(1) 
Let  $z\in Q\bigcap Q'$. We know that such a $z$ exists  because $Q\leftrightarrow Q'$. For any $x\in Q$ and $y\in Q'$,      by Lemma \ref{6}(\ref{i7}), we have

\begin{equation*}
|x-y|\le|x-z|+|z-y|\le\delta_Q+\delta_{Q'}\le 3\delta_Q.
\end{equation*}
This implies that  $Q-y\subseteq  B_{3\delta_Q}(0)$. By changing of variables,
\begin{equation*}
\begin{split}
\int_{Q'}\int_{Q}\frac{1}{|x-y|^{n+\{s\}p-p}}\mathrm{d}x\mathrm{d}y\le&\int_{Q'}\int_{Q-y}\frac{1}{|x|^{n+\{s\}p-p}}\mathrm{d}x\mathrm{d}y\\
&\lesssim\int_{Q'}\int_{B_{3\delta_Q}(0)}\frac{1}{|x|^{n+\{s\}p-p}}\mathrm{d}x\mathrm{d}y\\
&\lesssim\delta_{Q'}^n\delta_Q^{p-\{s\}p}\int_{B_{1}(0)}\frac{1}{|x|^{n+\{s\}p-p}}\mathrm{d}x\\
&\lesssim\delta_Q^{n+p-\{s\}p},
\end{split}
\end{equation*}
where the last inequality follows that the integral 
\begin{equation*}
\int_{B_{1}(0)}\frac{1}{|x|^{n+\{s\}p-p}}\mathrm{d}x\mathrm{d}y
\end{equation*}
is finite and only depending on $n,s$ and $p$.

(2) By Lemma \ref{6}(\ref{i8}),  for any $Q'\not\leftrightarrow Q$, we have $Q'\bigcap (2Q)^\circ=\emptyset$. This implies that for any $x\in Q$,
\begin{equation}\label{e16}
Q'\subseteq(B_{\frac{1}{2\sqrt{n}}\delta_Q}(x))^c,
\end{equation}
and for any $x'\in D$,
by Lemma \ref{1}(\ref{i5}), $10\delta_Q\le$dist$(Q,D)\le|x'-x|$. Therefore 
\begin{equation}\label{e17}
D\subseteq(B_{10\delta_Q}(x))^c\subseteq(B_{\frac{1}{2\sqrt{n}}\delta_Q}(x))^c.
\end{equation}
By equation (\ref{e16}) and equation (\ref{e17}), using the fact that the interiors of different cubes are disjoint, we have 
\begin{align*}
\max &\left\{\int_{Q}\sum_{Q':Q\not\leftrightarrow Q'}\int_{Q'} \frac{1}{|x-y|^{n+\{s\}p}}\mathrm{d}x\mathrm{d}y, ~ \int_{Q} \int_{D}\frac{1}{|x-y|^{n+\{s\}p}}\mathrm{d}x\mathrm{d}y
\right\} \\
\le&\int_{Q}\int_{\big(B_{\frac{1}{2\sqrt{n}}\delta_Q}(x)\big)^c}\frac{1}{|x-y|^{n+\{s\}p}}\mathrm{d}y\mathrm{d}x\\
=&\int_{Q}\int_{\big(B_{\frac{1}{2\sqrt{n}}\delta_Q}(0)\big)^c}\frac{1}{|y|^{n+\{s\}p}}\mathrm{d}y\mathrm{d}x\\
\lesssim &\; \delta_Q^{n-\{s\}p}\int_{(B_{1}(0))^c}\frac{1}{|y|^{n+\{s\}p}}\mathrm{d}y\\
\lesssim &\; \delta_Q^{n-\{s\}p},
\end{align*}
where the last step follows from the fact that the integral in the fourth line is a finite constant only depending on $n,s$ and $p$.
\end{proof}
\section{Proof of main result Theorem \ref{main}}\label{proof}

If $\overline{E}$ is either empty or equal to $\mathbb{R}^n$, the theorem holds trivially. Therefore, we consider the case where $\overline{E}$ is neither of these extreme cases. 
 Then by Lemma \ref{1}, there exists a family of closed cubes $\mathscr{F}=\{Q_k\}$ such that 
\begin{enumerate}[(1)]
\item $\bigcup_{k}Q_k=(\overline{E})^c$.

\item The interiors of $Q_k$ are mutually disjoint.

\item 10$\delta_{Q_k}\le$ dist$(Q_k,\overline{E})\le 22\delta_{Q_k}$ for any $k$.

\item For every $k$, there exists $m\in\mathbb{Z}$ and $(a_i)_{i=1}^n\in\mathbb{Z}^n$ such that $Q_k=\prod_{i=1}^n[a_i2^{-m},(a_i+1)2^{-m}]$.
\end{enumerate}
There is a family of functions $\{\theta_P\}_{P\in \mathscr F}\subseteq C^\infty(\mathbb{R}^n)$ satisfying the conclusions in Lemma \ref{3} corresponding to the given family $\mathscr{F}$. By Definition \ref{defn:paths} and \ref{24}, we equip each $P\in\mathscr{F}$ with a set $A_P$ and a point $x_P\in \overline{E}$. Now we define the operator $T:J^{\lfloor s\rfloor}_EL^{s,p}(\mathbb{R}^n)\rightarrow L^{s,p}(\mathbb{R}^n)$ as follows:

Let $f\in J^{\lfloor s\rfloor}_EL^{s,p}(\mathbb{R}^n)$. By the definition of $J^{\lfloor s\rfloor}_EL^{s,p}(\mathbb{R}^n)$ in equation (\ref{15}), let $F\in L^{s,p}(\mathbb{R}^n)$ such that
\begin{equation}\label{e29}
(J^{\lfloor s\rfloor}_xF)_{x\in E}=f.
\end{equation}
Define 
\begin{equation*}
\begin{split}
Tf(x)=\begin{cases}
\sum_{P\in\mathscr{F}}\theta_P(x)J^{\lfloor s\rfloor}_{x_P}F(x),&x\in\mathbb{R}^n\backslash \overline{E};\\
F(x), &x\in \overline{E}.
\end{cases}
\end{split}
\end{equation*}

By the Sobolev embedding theorem, $F\in C^{\lfloor s\rfloor}(\mathbb{R}^n)$. By Lemma \ref{10}, the expression on the right hand side of the above equation is well defined for any given $F$. By equation (\ref{e29}) and continuity, the value $F(x)$ is independent of the choice of $F$ for any $x\in\overline{E}$. For any $P\in\mathscr{F}$, because $x_P\in\overline{E}$, $J_{x_P}^{\lfloor s\rfloor}F$ is independent of the choice of $F$ by continuity. Therefore, $T(f)$ is independent of the choice of $F$. So it is well-defined.

It is clear that  $T$ is a linear operator. We will prove it is an extension operator:

Let $f\in J_E^{\lfloor s \rfloor}L^{s,p}(\mathbb{R}^n)$. Let  $F\in L^{s,p}(\mathbb{R}^n)$   be any function satisfying $(J^{\lfloor s\rfloor}_xF)_{x\in E}=f$. By Lemma \ref{10}, $Tf\in C^{\lfloor s\rfloor}(\mathbb{R}^n)$ and for any $|i|\le\lfloor s\rfloor$ and $x\in \overline{E}$, we have 
\begin{equation}\label{e6}
\partial^i T(f)(x)=\partial^iF(x).
\end{equation}
Therefore
\begin{equation*}
(J^{\lfloor s\rfloor}_xT(f))_{x\in E}=(J^{\lfloor s\rfloor}_xF)_{x\in E}=f.
\end{equation*}
Therefore $T$ is an extension operator. Next we prove that the operator is bounded. 

For this, we only need to show that $\|Tf\|_{L^{s,p}(\mathbb{R}^n)} \lesssim \|F\|_{L^{s,p}(\mathbb{R}^n)}$.  
We will complete this task by evaluating integrals over four disjoint regions and by using the Whitney decomposition of $(\overline{E})^c$ that we constructed above, as detailed below:

By the definition of homogeneous fractional Sobolev space in equation (\ref{e14}), equation \eqref{e6}, Lemma \ref{9} and Lemma \ref{3}(\ref{i11}), we have
\begin{align*} 
&\|Tf\|_{L^{s,p}(\mathbb{R}^n)}^p\lesssim\sum_{|k|=\lfloor s\rfloor}\int_{\mathbb{R}^n}\int_{\mathbb{R}^n}\frac{|\partial^k(Tf)(x)-\partial^k(Tf)(y)|^p}{|x-y|^{n+\{s\}  p}}\mathrm{d}x\mathrm{d}y\\
 \lesssim& \sum_{|k|=\lfloor s\rfloor}\int_{\overline{E}}\int_{\overline{E}}\frac{|\partial^kF(x)-\partial^kF(y)|^p}{|x-y|^{n+\{s\}p}}\mathrm{d}x\mathrm{d}y\\
&+\sum_{|k|=\lfloor s\rfloor}\sum_{Q,Q':Q\not\leftrightarrow Q'}\int_{Q'}\int_{Q}\frac{|\sum_{P\in \mathscr{F} }\partial^k(\theta_P(x)J^{\lfloor s\rfloor}_{x_P}F(x))-\sum_{P\in \mathscr{F}}\partial^k(\theta_P(y)J^{\lfloor s\rfloor}_{x_P}F(y))|^p}{|x-y|^{n+\{s\}p}}\mathrm{d}x\mathrm{d}y\\
&+\sum_{|k|=\lfloor s\rfloor}\sum_{Q,Q':Q\leftrightarrow Q'}\int_{Q'}\int_{Q}\frac{|\sum_{P\in \mathscr{F}}\partial^k(\theta_P(x)J^{\lfloor s\rfloor}_{x_P}F(x))-\sum_{P\in \mathscr{F}}\partial^k(\theta_P(y)J^{\lfloor s\rfloor}_{x_P}F(y))|^p}{|x-y|^{n+\{s\}p}}\mathrm{d}x\mathrm{d}y\\
&+\sum_{|k|=\lfloor s\rfloor}\sum_Q\int_{\overline{E}}\int_{Q}\frac{|\sum_{P\in \mathscr{F}}\partial^k(\theta_P(x)J^{\lfloor s\rfloor}_{x_P}F(x))-\partial^kF(y)|^p}{|x-y|^{n+\{s\}p}}\mathrm{d}x\mathrm{d}y\\
=& \sum_{|k|=\lfloor s\rfloor}
\left(\text{I}_k + \text{II}_k+ \text{III}_k +\text{IV}_k\right). 
\end{align*}

\vskip.1in 

Notice that by the definition of $\|F\|_{L^{s,p}(\mathbb{R}^n)}$, we already  have 
\begin{equation}\label{20}
\text{I}_k\le\|F\|_{L^{s,p}(\mathbb{R}^n)}^p.
\end{equation}

\vskip.1in 

For the rest, we will first compute $\text{II}_k$. Using Lemma \ref{3}(\ref{i4}), we can express $\text{II}_k$ as the sum of three components as follows: 
\begin{align}\notag
\text{II}_k =&\sum_{Q,Q':Q\not\leftrightarrow Q'}\int_{Q'}\int_{Q}\frac{|\sum_P\partial^k(\theta_P(x)J^{\lfloor s\rfloor}_{x_P}F(x))-\sum_P\partial^k(\theta_P(y)J^{\lfloor s\rfloor}_{x_P}F(y))|^p}{|x-y|^{n+\{s\}p}}\mathrm{d}x\mathrm{d}y\\\label{eq:1}
\lesssim&\sum_{Q,Q':Q\not\leftrightarrow Q'}\int_{Q'}\int_{Q}\frac{|\sum_{0\le i\le k}\sum_P\partial^{k-i}\theta_P(x)(\partial^iJ^{\lfloor s\rfloor}_{x_P}F(x)-\partial^iF(x))|^p}{|x-y|^{n+\{s\}p}}\mathrm{d}x\mathrm{d}y\\\label{eq:2}
&+\sum_{Q,Q':Q\not\leftrightarrow Q'}\int_{Q'}\int_{Q}\frac{|\partial^kF(x)-\partial^kF(y)|^p}{|x-y|^{n+\{s\}p}}\mathrm{d}x\mathrm{d}y\\\label{eq:3}
&+\sum_{Q,Q':Q\not\leftrightarrow Q'}\int_{Q'}\int_{Q}\frac{|\sum_{0\le i\le k}\sum_P\partial^{k-i}\theta_P(y)(\partial^iJ^{\lfloor s\rfloor}_{x_P}F(y)-\partial^iF(y))|^p}{|x-y|^{n+\{s\}p}}\mathrm{d}x\mathrm{d}y.
\end{align}
Observe that the  terms \eqref{eq:1} and \eqref{eq:3} in the sum above  are identical by swapping the variables $x$ and $y$. Now  
we will bound the term  \eqref{eq:1}. By applying Lemma \ref{6}(\ref{i8}), Lemma \ref{3}(\ref{i11}) and Remark \ref{11}, we obtain:  
\begin{equation}\eqref{eq:1} \lesssim 
 \sum_{Q,Q':Q\not\leftrightarrow Q'}\sum_{P:P\leftrightarrow Q} \sup_{t\in Q}\sum_{0\le i\le k} \delta_P^{(|i|-|k|)p} |\partial^iJ^{\lfloor s\rfloor}_{x_P}F(t)-\partial^iF(t)|^p\int_{Q'}\int_{Q}\frac{1}{|x-y|^{n+\{s\}p}}\mathrm{d}x\mathrm{d}y. \label{e15}
\end{equation}
Note that by Lemma \ref{14} and Lemma \ref{7}(\ref{i2}),
\begin{align*}
(\ref{e15})\lesssim &\sum_P\sup_{t\in \bigcup_{Q\leftrightarrow P}Q}\sum_{0\le i\le k}\delta_P^{|i|p+n-sp}|\partial^iJ^{\lfloor s\rfloor}_{x_P}F(t)-\partial^iF(t)|^p\\
\le&\sum_P\sup_{t\in A_P}\sum_{|i|\le\lfloor s\rfloor}\delta_P^{|i|p+n-sp}|\partial^iJ^{\lfloor s\rfloor}_{x_P}F(t)-\partial^iF(t)|^p.
\end{align*}
Combining the above estimations on \eqref{eq:1} and \eqref{eq:3} (by symmetry), and using the upper bound $\|F\|_{L^{s,p}(\mathbb{R}^n)}^p$ for the middle term \eqref{eq:2}, we obtain: 
\begin{equation}\label{21}
\text{II}_k\lesssim\sum_P\sup_{t\in A_P}\sum_{|i|\le\lfloor s\rfloor}\delta_P^{|i|p+n-sp}|\partial^iJ^{\lfloor s\rfloor}_{x_P}F(t)-\partial^iF(t)|^p+\|F\|_{L^{s,p}(\mathbb{R}^n)}^p.
\end{equation}

\vskip.1in 

Next, we need to compute $\text{III}_k$. 
\begin{equation*}
\begin{split}
\text{III}_k=&\sum_{Q,Q':Q\leftrightarrow Q'}\int_{Q'}\int_{Q}\frac{|\sum_P\partial^k(\theta_P(x)J^{\lfloor s\rfloor}_{x_P}F(x))-\sum_P\partial^k(\theta_P(y)J^{\lfloor s\rfloor}_{x_P}F(y))|^p}{|x-y|^{n+\{s\}p}}\mathrm{d}x\mathrm{d}y\\
\lesssim&\sum_{Q,Q':Q\leftrightarrow Q'}\int_{Q'}\int_{Q}\frac{|\sum_P\sum_{0\le i\le k}(\partial^{k-i}\theta_P(x)-\partial^{k-i}\theta_P(y))\partial^iJ^{\lfloor s\rfloor}_{x_P}F(x)|^p}{|x-y|^{n+\{s\}p}}\mathrm{d}x\mathrm{d}y\\
&+\sum_{Q,Q':Q\leftrightarrow Q'}\int_{Q'}\int_{Q}\frac{|\sum_P\sum_{0\le i\le k}\partial^{k-i}\theta_P(y)(\partial^iJ^{\lfloor s\rfloor}_{x_P}F(x)-\partial^i J^{\lfloor s\rfloor}_{x_P}F(y))|^p}{|x-y|^{n+\{s\}p}}\mathrm{d}x\mathrm{d}y\\
=&\text{III}_{k,1}+
\text{III}_{k,2}.
\end{split}
\end{equation*}

First we compute III$_{k,1}$ as follows. Using Lemma \ref{3}(\ref{i4}), observe that $\sum_P \partial^i(\theta_P(x) - \theta_P(y)) = 0$ for any multi-index $i$, and $x,y \in (\overline{E})^c$. Therefore, 
\begin{align}\notag
\text{III}_{k,1}=&\sum_{Q,Q':Q\leftrightarrow Q'}\int_{Q'}\int_{Q}\frac{|\sum_P\sum_{0\le i\le k}(\partial^{k-i}\theta_P(x)-\partial^{k-i}\theta_P(y))\partial^iJ^{\lfloor s\rfloor}_{x_P}F(x)|^p}{|x-y|^{n+\{s\}p}}\mathrm{d}x\mathrm{d}y\\\notag
=&\sum_{Q,Q':Q\leftrightarrow Q'}\int_{Q'}\int_{Q}\frac{|\sum_P\sum_{0\le i\le k}(\partial^{k-i}\theta_P(x)-\partial^{k-i}\theta_P(y))(\partial^iJ^{\lfloor s\rfloor}_{x_P}F(x)-\partial^i F(x))|^p}{|x-y|^{n+\{s\}p}}\mathrm{d}x\mathrm{d}y.
\end{align}
Applying Lemma \ref{6}(\ref{i8}) and the mean value theorem, we continue the estimation:
\begin{align}\notag
\text{III}_{k,1} \le&\sum_{Q,Q':Q\leftrightarrow Q'}\int_{Q'}\int_{Q}\frac{\left(\sum_{P:P\leftrightarrow Q \text{ or }P\leftrightarrow Q'}\sum_{0\le i\le k}|\sup_{t\in[x,y]}\nabla\partial^{k-i}\theta_P(t)||\partial^iJ^{\lfloor s\rfloor}_{x_P}F(x)-\partial^i F(x)|\right)^p}{|x-y|^{n+\{s\}p-p}}\mathrm{d}x\mathrm{d}y\\\notag
\lesssim&\sum_{Q,Q':Q\leftrightarrow Q'}\sum_{P:P\leftrightarrow Q \text{ or }P\leftrightarrow Q'}\sup_{t\in Q}\sum_{0\le i\le k}\delta_P^{-(|k|-|i|+1)p}|\partial^iJ^{\lfloor s\rfloor}_{x_P}F(t)-\partial^i F(t)|^p\int_{Q'}\int_{Q}\frac{1}{|x-y|^{n+\{s\}p-p}}\mathrm{d}x\mathrm{d}y.
\end{align}
We obtained the preceding line 
 by Remark \ref{11}. Now, by an application of  Lemma \ref{7}(\ref{i3}) and Lemma \ref{14} we obtain the following: 
\begin{align}\notag
\text{III}_{k,1}\lesssim & \sum_{Q,Q':Q\leftrightarrow Q'}\sum_{P:P\leftrightarrow Q \text{ or }P\leftrightarrow Q'}\sup_{t\in A_P}\sum_{0\le i\le k}\delta_P^{n-sp+|i|p}|\partial^iJ^{\lfloor s\rfloor}_{x_P}F(t)-\partial^i F(t)|^p \\\label{e12}
\lesssim&\sum_P\sup_{t\in A_P}\sum_{|i|\le\lfloor s\rfloor}\delta_P^{n-sp+|i|p}|\partial^iJ^{\lfloor s\rfloor}_{x_P}F(t)-\partial^i F(t)|^p. 
\end{align}
This completes the estimation of $\text{III}_{k,1}$. 
\medskip

Next, we estimate 
$\text{III}_{k,2}$.  Notice that 
for any $x,y\in\mathbb{R}^n$ and $P\in\mathscr{F}$, we have 

\begin{equation}\label{e8}
\partial^kJ^{\lfloor s\rfloor}_{x_P}F(x)=\partial^kF(x_P)=\partial^k J^{\lfloor s\rfloor}_{x_P}F(y).
\end{equation}
Therefore, by equation (\ref{e8}), we omit the term for $i = k$ in $\text{III}_{k,2}$ and write:

\begin{equation}\notag
\text{III}_{k,2}=\sum_{Q,Q':Q\leftrightarrow Q'}\int_{Q'}\int_{Q}\frac{|\sum_P\sum_{0\le i< k}\partial^{k-i}\theta_P(y)(\partial^iJ^{\lfloor s\rfloor}_{x_P}F(x)-\partial^i J^{\lfloor s\rfloor}_{x_P}F(y))|^p}{|x-y|^{n+\{s\}p}}\mathrm{d}x\mathrm{d}y.
\end{equation}
Using Lemma \ref{3}(\ref{i4}), Lemma \ref{6}(\ref{i8}), and the mean value theorem once again, we get: 

\begin{align}\notag
\text{III}_{k,2}=&\sum_{Q,Q':Q\leftrightarrow Q'}\int_{Q'}\int_{Q}\frac{\big|\sum_P\sum_{0\le i<k}\partial^{k-i}\theta_P(y)\big((\partial^iJ^{\lfloor s\rfloor}_{x_P}F(x)-\partial^iF(x))-(\partial^i J^{\lfloor s\rfloor}_{x_P}F(y)-\partial^iF(y))\big)\big|^p}{|x-y|^{n+\{s\}p}}\mathrm{d}x\mathrm{d}y\\\notag
\le&\sum_{Q,Q':Q\leftrightarrow Q'}\int_{Q'}\int_{Q}\frac{(\sum_{P\leftrightarrow Q'}\sum_{0\le i< k}\sup_{t\in[x,y]}|\partial^{k-i}\theta_P(y)\nabla(\partial^iJ^{\lfloor s\rfloor}_{x_P}F-\partial^iF)(t)\cdot(x-y)|)^p}{|x-y|^{n+\{s\}p}}\mathrm{d}x\mathrm{d}y. 
\end{align}
By Remark (\ref{11}) and a change of  variable for $i$, we have

\begin{align*}
\text{III}_{k,2}\lesssim&\sum_{Q,Q':Q\leftrightarrow Q'}\int_{Q'}\int_{Q}\frac{\sum_{P\leftrightarrow Q'}\sum_{0\le i<k}\sup_{t\in[x,y]}\delta_P^{|i|p-\lfloor s\rfloor p}|\nabla\partial^iJ^{\lfloor s\rfloor}_{x_P}F(t)-\nabla\partial^iF(t)|^p}{|x-y|^{n+\{s\}p-p}}\mathrm{d}x\mathrm{d}y\\
\lesssim&\sum_{Q,Q':Q\leftrightarrow Q'}\int_{Q'}\int_{Q}\frac{\sum_{P\leftrightarrow Q'}\sum_{|i|\le\lfloor s\rfloor}\sup_{t\in[x,y]}\delta_P^{|i|p-p-\lfloor s\rfloor p}|\partial^iJ^{\lfloor s\rfloor}_{x_P}F(t)-\partial^iF(t)|^p}{|x-y|^{n+\{s\}p-p}}\mathrm{d}x\mathrm{d}y.\addtocounter{equation}{1}\tag{\theequation}\label{e11}\\
\end{align*}
By Lemma \ref{7}(\ref{i3}) and Lemma \ref{14},
\begin{align*}
(\ref{e11})\le&\sum_{Q,Q':Q\leftrightarrow Q'}\sum_{P\leftrightarrow Q'}\sum_{|i|\le\lfloor s\rfloor}\sup_{t\in A_P}\delta_P^{|i|p-p-\lfloor s\rfloor p}|\partial^iJ^{\lfloor s\rfloor}_{x_P}F(t)-\partial^iF(t)|^p\int_{Q'}\int_{Q}\frac{1}{|x-y|^{n+\{s\}p-p}}\mathrm{d}x\mathrm{d}y\\
\lesssim&\sum_P\sup_{t\in A_P}\sum_{|i|\le\lfloor s\rfloor}\delta_P^{n-sp+|i|p}|\partial^iJ^{\lfloor s\rfloor}_{x_P}F(t)-\partial^i F(t)|^p.
\end{align*}
A combination of the estimations above yields 

\begin{equation}\label{e13}
\text{III}_{k,2}\lesssim\sum_P\sup_{t\in A_P}\sum_{|i|\le\lfloor s\rfloor}\delta_P^{n-sp+|i|p}|\partial^iJ^{\lfloor s\rfloor}_{x_P}F(t)-\partial^i F(t)|^p.
\end{equation}
Combining inequality (\ref{e12}) and (\ref{e13}), we get

\begin{equation}\label{22}
\text{III}_k=\text{III}_{k,1}+\text{III}_{k,2}\lesssim\sum_P\sup_{t\in A_P}\sum_{|i|\le\lfloor s\rfloor}\delta_P^{n-sp+|i|p}|\partial^iJ^{\lfloor s\rfloor}_{x_P}F(t)-\partial^i F(t)|^p.
\end{equation}

Next we compute 
$\text{IV}_k$. As before, we use the variant of triangle inequality, to bound:
\begin{align*}
\text{IV}_k=&\sum_Q\int_{\overline{E}}\int_{Q}\frac{|\sum_P\partial^k(\theta_P(x)J^{\lfloor s\rfloor}_{x_P}F(x))-\partial^kF(y)|^p}{|x-y|^{n+\{s\}p}}\mathrm{d}x\mathrm{d}y\\
\lesssim&\sum_Q\int_{\overline{E}}\int_{Q}\frac{|\sum_{0\le i\le k}\sum_{P\leftrightarrow Q}\partial^{k-i}\theta_P(x)(\partial^iJ^{\lfloor s\rfloor}_{x_P}F(x)-\partial^iF(x))|^p}{|x-y|^{n+\{s\}p}}\mathrm{d}x\mathrm{d}y\\
&+\int_{\overline{E}}\int_{Q}\frac{|\partial^kF(x)-\partial^kF(y)|^p}{|x-y|^{n+\{s\}p}}\mathrm{d}x\mathrm{d}y\hskip11.5em\text{(by Lemma \ref{3} (\ref{i4}))}\\
\lesssim&\sum_Q\sum_{P\leftrightarrow Q} \sup_{t\in Q}\sum_{0\le i\le k}\delta_P^{(|i|-|k|)p} |\partial^iJ^{\lfloor s\rfloor}_{x_P}F(t)-\partial^i F(t)|^p\int_{\overline{E}}\int_{Q}\frac{1}{|x-y|^{n+\{s\}p}}\mathrm{d}x\mathrm{d}y \\
& + \quad \|F\|_{L^{s,p}(\mathbb{R}^n)}^p\hspace{8cm} \text{(by Remark \ref{11})}\\
\lesssim&\sum_P\sup_{t\in A_P}\sum_{|i|\le\lfloor s\rfloor}\delta_P^{n-sp+|i|p}|\partial^iJ^{\lfloor s\rfloor}_{x_P}F(t)-\partial^i F(t)|^p+\|F\|_{L^{s,p}(\mathbb{R}^n)}^p\quad \text{(by Lemma \ref{7}(\ref{i2}), Lemma \ref{14})}.\addtocounter{equation}{1}\tag{\theequation}\label{23}
\end{align*}

Combining the results from the computations of terms \hyperref[20]{I$_k$}, \hyperref[21]{II$_k$}, \hyperref[22]{III$_k$}, and  \hyperref[23]{IV$_k$} above, we obtain 
\begin{align*}
\|Tf\|_{L^{s,p}(\mathbb{R}^n)}^p\lesssim&\sum_{k:~ |k|=\lfloor s\rfloor}\Big(\sum_{P}\sup_{t\in A_P}\sum_{|i|\le\lfloor s\rfloor}\delta_P^{n-sp+|i|p}|\partial^iJ^{\lfloor s\rfloor}_{x_P}F(t)-\partial^i F(t)|^p+\|F\|_{L^{s,p}(\mathbb{R}^n)}^p\Big)\\
\lesssim&\sum_P\sup_{t\in A_P}\sum_{|i|\le\lfloor s\rfloor}\delta_P^{n-sp+|i|p}|\partial^iJ^{\lfloor s\rfloor}_{x_P}F(t)-\partial^i F(t)|^p+\|F\|_{L^{s,p}(\mathbb{R}^n)}^p.
\end{align*}

Let $\epsilon=\frac{\{s\}p-n}{2}$. For any $P\in\mathscr{F}$, let $a_P\in A_P$ such that
\begin{equation*}
\sum_{|i|\le\lfloor s\rfloor}\delta_P^{n-sp+|i|p}|\partial^iJ^{\lfloor s\rfloor}_{x_P}F(a_P)-\partial^i F(a_P)|^p=\sup_{t\in A_P}\sum_{|i|\le\lfloor s\rfloor}\delta_P^{n-sp+|i|p}|\partial^iJ^{\lfloor s\rfloor}_{x_P}F(t)-\partial^i F(t)|^p.
\end{equation*}

For each $P \in \mathscr{F}$ let $(P_j=P_j(P))_{j=0}^{\infty}$ be the sequence of cubes generated in Lemma \ref{2} corresponding to cube $P$ and the point $x=a_P$.
Then by Lemma \ref{12},
\begin{equation}\label{13}
\begin{split}
\|Tf\|_{L^{s,p}(\mathbb{R}^n)}^p\lesssim&\sum_P\sum_{|i|\le\lfloor s\rfloor}\delta_P^{n-sp+|i|p}|\partial^iJ^{\lfloor s\rfloor}_{x_P}F(a_P)-\partial^i F(a_P)|^p+\|F\|_{L^{s,p}(\mathbb{R}^n)}^p\\
\lesssim&\sum_P\delta_P^{n-\{s\}p+\epsilon}\sum_{j=0}^{\infty}\|F\|_{L^{s,p}(P_j(P))}^p\delta_{P_j(P)}^{\{s\}p-n-\epsilon}+\|F\|_{L^{s,p}(\mathbb{R}^n)}^p.
\end{split}
\end{equation}
For two cubes $P$ and $P'$ in $\mathscr{F}$, we say $P \rightarrow P'$ if there exists a $j$ such that $P' = P_j(P)$.
With this, we change the order of the summation in the previous sum to get:
\begin{align}\label{innersum}
\sum_P\delta_{P}^{n-\{s\}p+\epsilon}\sum_{j=0}^{\infty}\|F\|_{L^{s,p}(P_j(P))}^p\delta_{P_j(P)}^{\{s\}p-n-\epsilon}=\sum_{P'}\|F\|_{L^{s,p}(P')}^p\left(\delta_{P'}^{\{s\}p-n-\epsilon}\sum_{P:P\rightarrow P'}\delta_{P}^{n-\{s\}p+\epsilon}\right).
\end{align}
Recall that for all $i<j$, we have $\delta_{P_j}\le Aa^{j-i}\delta_{P_i}$ (see item (\ref{i1}) in Lemma \ref{2}). By this and item (\ref{i10}) in Lemma \ref{2}, for each cube $P'$, if $P \rightarrow P'$, then 
\begin{equation*}
\text{dist}(P,P') \leq \sum_{j=0}^{\infty} \delta_{P_j} \lesssim \delta_{P},
\end{equation*}
where we have written $P_j = P_j(P)$. So for each $\ell\in\mathbb{Z}$, there are at most $C$ cubes $P$ such that $\delta_P=2^\ell$ and $P\rightarrow P'$. Then, for the inner sum on the far right of \eqref{innersum}, using that $n - \{s\}p + \epsilon < 0$, we have
\begin{equation*}
\begin{split}
\sum_{P:P\rightarrow P'}\delta_{P}^{n-\{s\}p+\epsilon}=&\sum_{P:P\rightarrow P' \text{ and }\delta_P\ge\frac{\delta_{P'}}{A}}\delta_{P}^{n-\{s\}p+\epsilon}\\
=&\sum_{\ell\ge\log_2\frac{\delta_{P'}}{A}}\,\,\,\sum_{P:P\rightarrow P' \text{ and }\delta_P=2^\ell}\delta_{P}^{n-\{s\}p+\epsilon}\\
\lesssim&\sum_{\ell\ge\log_2\frac{\delta_{P'}}{A}}(2^{n-\{s\}p+\epsilon})^\ell\\
\lesssim&\;\delta_{P'}^{n-\{s\}p+\epsilon}.
\end{split}
\end{equation*}
Using this in \eqref{innersum},  we obtain 
\begin{equation*}
\sum_P\delta_{P}^{n-\{s\}p+\epsilon}\sum_{j=0}^{\infty}\|F\|_{L^{s,p}(P_j(P))}^p\delta_{P_j(P)}^{\{s\}p-n-\epsilon}\lesssim\sum_{P'}\|F\|_{L^{s,p}(P')}^p\leq \|F\|_{L^{s,p}(\mathbb{R}^n)}^p. 
 \end{equation*}
Combining this with equation (\ref{13}), we obtain that 
\begin{equation*}
\|Tf\|_{L^{s,p}(\mathbb{R}^n)}^p\lesssim\|F\|_{L^{s,p}(\mathbb{R}^n)}^p.
\end{equation*}
This shows the boundedness of $T$ and completes the proof of our main result, Theorem \ref{main}. 

\begin{appendices}
\section{Proof of Lemma \ref{10}}\label{A1}
\begin{proof}
First, by Lemma \ref{9} and Lemma \ref{3}(\ref{i11}), $G$ is well defined and $G$ is $C^{\infty}$ on $D^c$ because the sum in $G$ is a finite sum in a neighborhood of any point in $D^c$.
Let $H=G-F$. Because $F\in C^m(\mathbb{R}^n)$, we get $H$ is $C^m$ on $D^c$. By Lemma \ref{3}(\ref{i4}), we have
\begin{equation*}
H(x)=\begin{cases}
\sum_{P\in\mathscr{F}}\theta_P(x)(J_{x_P}^{m}F(x)-F(x)),&x\in\mathbb{R}^n\backslash D;\\
0,&x\in D.
\end{cases}
\end{equation*}

For any $k$ such that $0 \le k \le m$, let $S(k)$ be the statement that for any multi-index $i$ with $0 \le |i| < k$, the partial derivative $\partial^i H$ is differentiable, and for any $0 \le |i| \le k$, we have $\partial^i H(x) = 0$ for all $x \in D$. It is clear that $S(0)$ is true.

For any  multi-index $i$ with  $|i|\le m$, for any $x_0\in D$, and  any $x\in D^c$, there exists $Q\in\mathscr{F}$ such that $x\in Q$. For any cube  $P$ such that $P\leftrightarrow Q$,  we have the following results   by applying Lemmas \ref{1}(\ref{i5}) and \ref{6}(\ref{i7}):
\begin{equation*}
|x-x_P|\le\delta_Q+\delta_P+\text{dist}(P,D)\le 
\delta_Q+23\delta_P  \le47\delta_Q,
\end{equation*}
and
\begin{equation}\label{e26}
\delta_Q\le\frac{1}{10}\text{dist}(Q,D)\le\frac{1}{10}|x-x_0|.
\end{equation}
Combining the above two inequalities together, we get
\begin{equation}\label{e4}
|x-x_P|\le\frac{47}{10}|x-x_0|.
\end{equation}
From inequality (\ref{e26}) and Lemma \ref{6}(\ref{i7}), we get
\begin{equation}\label{e25}
\delta_P\le2\delta_Q\le\frac{1}{5}|x-x_0|.
\end{equation}

Next, we will derive an upper bound for $\partial^i H(x)$:

\begin{align*}
|\partial^i H(x)|=&\Big|\partial^i\sum_P\theta_P(x)\big(J_{x_P}^{m}F(x)-F(x)\big)\Big|\\
\lesssim&\sum_{j\le i}\sum_P\big|\partial^{i-j}\theta_P(x)\big|\big|\partial^jJ_{x_P}^{m}F(x)-\partial^jF(x)\big|\\
\lesssim&\sum_{j\le i}\sum_{P\leftrightarrow Q}\delta_P^{|j|-|i|}
\big|J_{x_P}^{m-|j|}\partial^jF(x)-\partial^jF(x)\big| &(\text{by Remark \ref{11})}\\
\lesssim&\sum_{j\le i}\sum_{P\leftrightarrow Q}\delta_P^{|j|-|i|}\sum_{|h|=m}\sup_{|t-x|\le|x-x_P|}|\partial^hF(t)-\partial^hF(x_P)||x-x_P|^{m-|j|} &(\text{by Proposition \ref{p1}})\\\label{by(6)}
\lesssim&\sum_{P\leftrightarrow Q}\sum_{|h|=m}\sup_{|t-x|\le 5|x-x_0|}|\partial^hF(t)-\partial^hF(x_P)||x-x_0|^{m-|i|}\addtocounter{equation}{1}\tag{\theequation},
\end{align*}
where the latter  inequality is derived from    inequality  \eqref{e4} and inequality (\ref{e25}), all constants are only depending on $n$ and $m$. By triangle inequality and inequality (\ref{e4}), we shall continue as follows:
\begin{align*}
\eqref{by(6)} ~ \lesssim &\sum_{P\leftrightarrow Q}\sum_{|h|=m}\sup_{|t-x|\le 5|x-x_0|}|\partial^hF(t)-\partial^hF(x_0)||x-x_0|^{m-|i|}  \\
&+\sum_{P\leftrightarrow Q}\sum_{|h|=m}\sup_{|t-x|\le 5|x-x_0|}|\partial^hF(x_0)-\partial^hF(x_P)||x-x_0|^{m-|i|}\\
\lesssim&|x-x_0|^{m-|i|}\sum_{|h|=m}\sup_{|t-x|\le 5|x-x_0|}|\partial^hF(t)-\partial^hF(x_0)|.\addtocounter{equation}{1}\tag{\theequation}\label{e5}
\end{align*}

For any $0 \le k < m$, assume that the statement $S(k)$ is true. Then, for any $i$ with $|i| = k$, and for any $x_0 \in D$ and $x \in D^c$, we have $\partial^i H(x_0) = 0$ because $S(k)$ is true. Therefore, by inequality (\ref{e5}), we can write:
\begin{equation*}
\begin{split}
 |\partial^iH(x)-\partial^iH(x_0)| =& |\partial^iH(x)| \\
\lesssim&|x-x_0|^{m-|i|}\sum_{|h|=m}\sup_{|t-x|\le 5|x-x_0|}|\partial^hF(t)-\partial^hF(x_0)|.
\end{split}
\end{equation*}
Notice that 
\begin{equation*}
|x-x_0|^{m-1-|i|}\sum_{|h|=m}\sup_{|t-x|\le 5|x-x_0|}|\partial^hF(t)-\partial^hF(x_0)| 
\rightarrow 0 \quad \text{as}   \quad x\rightarrow x_0, \ x\in D^c,
\end{equation*}
because $|i|=k<m$.
Therefore
\begin{equation*}
\frac{|\partial^iH(x)-\partial^iH(x_0)|}{|x-x_0|}\rightarrow 0 \quad \text{as}   \quad x\rightarrow x_0, \ x\in D^c.
\end{equation*}
Also, for any $x\in D$, $\partial^iH(x)=\partial^iH(x_0)=0$. In conclusion,  we have    
\begin{equation}\label{e27}
\frac{|\partial^iH(x)-\partial^iH(x_0)|}{|x-x_0|} \rightarrow 0 \quad \text{as} \quad x\rightarrow x_0.
\end{equation}
This implies that $\partial^i H$ is differentiable at $x_0\in D$. Recall $H$ is $C^m$ on $D^c$. Therefore, $\partial^i H$ is differentiable on the entire $\mathbb{R}^n$. Furthermore, by the formula (\ref{e27}) and the definition of derivative,
for any $j$ such that $|j|=|i|+1=k+1$, we have  $\partial^j H(x)=0$. 

By induction, we conclude that $S(k)$ is true for all $0 \le k \le m$, and therefore also for $k = m$. This means that $H$ is $m$-times differentiable and that for any $|i| \le m$ and $x \in D$, 
\begin{equation}\label{e28}
\partial^i H(x) = 0.
\end{equation}

The final step is to prove that $\partial^i G$ is continuous for $|i| = m$. We only need to show that it is continuous at every point in $D$.

For any $x_0\in D$ and    any $x\in D^c$, using  inequality (\ref{e5}), we have 
\begin{equation*}
|\partial^iH(x)-\partial^iH(x_0)|=|\partial^iH(x)|\lesssim\sum_{|h|=m}\sup_{|t-x|\le 5|x-x_0|}|\partial^hF(t)-\partial^hF(x_0)| \rightarrow 0 \quad \text{as} \quad x\rightarrow x_0,\,x\in D^c.
\end{equation*}
For any $x\in D$, by equation (\ref{e28}),
\begin{equation*}
|\partial^iH(x)-\partial^iH(x_0)|=0.
\end{equation*}
Therefore 
\begin{equation*}
\lim_{x\rightarrow x_0}|\partial^iH(x)-\partial^iH(x_0)|=0.
\end{equation*}
This implies that $\partial^i H$ is continuous at every point in $D$, which in turn means that $\partial^i H$ is continuous on $\mathbb{R}^n$.
Since $H \in C^m(\mathbb{R}^n)$, it follows that $G = H + F \in C^m(\mathbb{R}^n)$. For any $x \in D$ and for all multi-indices $i$ with $|i| \leq m$, we have 
\begin{equation*}
\partial^i G(x) = \partial^i H(x) + \partial^i F(x) = \partial^i F(x).
\end{equation*}
\end{proof}
\end{appendices}

\printbibliography 

\end{document}